\newcommand{\ignore}[1]{}
\newcommand{\commE}{\todo[inline, color=green!40]}
\renewcommand{\Re}{\operatorname{Re}}
\newcommand{\e}{\varepsilon}
\newcommand{\C}{{\mathbb{C}}}
\newcommand{\R}{{\mathbb{R}}}
\newcommand{\D}{{\mathbb{D}}}
\newcommand{\p}{\partial}
\newcommand{\ol}{\overline}
\newtheorem{thm}{Theorem}[section]
\newtheorem{conj}[thm]{Conjecture}
\newtheorem{prop}[thm]{Proposition}
\newtheorem{lemma}[thm]{Lemma}
\newtheorem{prob}[thm]{Problem}
\theoremstyle{definition}
\newtheorem{defn}[thm]{Definition}
\theoremstyle{remark}
\newtheorem{remark}[thm]{Remark}
\title[Critical points]{A note on the critical points of the localization landscape}
\author{Erik Lundberg}
\address{Department of Mathematical Sciences, Florida Atlantic University, Boca Raton, USA-33431.}
\email{elundber@fau.edu}
\author{Koushik Ramachandran}
\address{Tata Institute of Fundamental Research, Centre for Applicable Mathematics, Bengaluru, India-560065}
\email{koushik@tifrbng.res.in}
\begin{document}
\maketitle

\begin{abstract}
Let $\Omega\subset\C$ be a bounded domain.  In this note, we use complex variable methods to study the number of critical points of the function $v=v_\Omega$
that solves the elliptic problem $\Delta v = -2$ in $\Omega,$ with boundary values $v=0$ on $\partial\Omega.$
This problem has a classical flavor but is especially motivated by recent studies on localization of eigenfunctions.
We provide an upper bound on the number of critical points of $v$ when $\Omega$ belongs to a special class of domains in the plane, namely, domains for which the boundary $\partial\Omega$ is contained in $\{z:|z|^2 = f(z) + \overline{f(z)}\},$ where $f'(z)$ is a rational function.  We furnish examples of domains where this bound is attained. We also prove a bound on the number of critical points in the case when $\Omega$ is a quadrature domain.
The note concludes with the statement of some open problems and conjectures.
\end{abstract}

\section{Introduction}
Let $\Omega\subset\mathbb{C}$ be a finitely-connected bounded domain,
and consider the solution $v$ to the following boundary value problem
\begin{equation} 
\label{eq:landscape}
\begin{cases}
\Delta v(z) = -2, & z\in\Omega \\
v(z) =0, & z\in\partial\Omega.
\end{cases}
\end{equation}
We will refer to $v$ as
the \emph{localization landscape function} or simply \emph{landscape function}
of $\Omega$ (this choice of terminology is explained in the next paragraph).
It is a simple fact that such a solution $v$ is unique. In this paper, we study the number of critical points of $v$ in $\Omega,$ i.e., the number of solutions to $\nabla v(z)=0$ for $z\in\Omega$. Observe that $v$ is a smooth superharmonic function in $\Omega$ that vanishes on the boundary.
By the minimum principle, $v> 0$ in $\Omega$ and therefore attains its maximum on $\overline\Omega$ at a point $z_0\in\Omega.$ This guarantees that there is at least one solution to $\nabla v(z)=0$. 
By superharmonicity we also have that $v$ does not have any minima
in $\Omega$ (only maxima and saddles).
Note further that by the Hopf lemma, 
$\nabla v$ cannot vanish on the boundary $\partial\Omega.$
Since $\partial \Omega$ is a level set of $v$
this implies that $\nabla v$ is parallel to the inward pointing normal so that $\nabla v$ makes a single revolution along each component of $\p \Omega$.
This allows an application of index theory in order to relate the number of maxima to the number of saddles (given the connectivity of $\partial \Omega$), see Theorem \ref{thm:Morse} below.
In particular, this produces a lower bound on the number of critical points of $v$ in terms of the connectivity of $\Omega$.

The problem of counting critical points of solutions to elliptic PDE is classical, see the survey \cite{Magnanini} and the references therein,
but our primary motivation for studying the critical points of the solution to \eqref{eq:landscape}
comes from recent studies on localization of eigenfunctions \cite{Mayboroda}, \cite{ArnoldDavidFilJerMay}, 
\cite{ArnoldDavidJerMayFil},
\cite{MayFilclamped}, \cite{Steinerberger}.
Namely, solutions to  \eqref{eq:landscape} have been proposed as a ``universal landscape'' governing the localization phenomena observed for eigenfunctions of the Laplacian.
The localization landscape (graph of $v$) is used to construct a valley network (a cell complex with vertices given by saddles and edges constructed from steepest descent curves along with arcs from $\partial \Omega$),
and each cell serves as a candidate region for localized high-amplitude activity exhibited by one of the eigenfunctions (see \cite{Mayboroda} for details).
The complexity of the valley network is closely tied to the critical points of $v$ (one may even use the maxima themselves as rudimentary 
predictors for locating high-amplitude regions).
Thus, it would be useful to have results estimating the number of critical points of $v$ in terms of basic geometric features of the domain $\Omega$.

When $\Omega$ is simply-connected the landscape function $v$ coincides with the so-called \emph{torsion function} of $\Omega$,
used in elasticity theory to model the shear stress
in a cylindrical bar with uniform cross section $\Omega$
undergoing a twisting force \cite{Musk}.
In this context, the critical points of $v$ correspond to the physical locations where no stress is felt.
Note that the torsion function of elasticity theory is defined differently in the multiply-connected case (having distinct Dirichlet conditions on separate boundary components); since
we are primarily motivated by the above-mentioned application to localization, we take vanishing Dirichlet condition on all boundary components in \eqref{eq:landscape}.

When $\Omega$ is convex, 
$v$ has a unique critical point.
It appears that very little is known beyond this.
It would be desirable to 
bound the number of critical points of $v$ in terms of
geometric features of $\partial \Omega$.
For instance, for $\Omega$ simply-connected
and $\partial \Omega$ smooth, is there an upper bound on the
number of critical points of $v$ in terms of the number of oscillations of $\p \Omega$ (defined as oscillations of the distance between $\p \Omega$ and a fixed reference point)?

Alternatively,
one may restrict attention to an interesting class of domains admitting an algebraic description
(accompanied by a notion of degree or order) and attempt to bound the number of critical points of $v$
in terms of the order of $\partial \Omega$.
Some natural classes that come to mind are quadrature domains, lemniscate domains, or the examples discussed in \cite{FleemanKhavinson}
for which $\nabla u$ turns out to be 
the complex conjugate of a rational function in $z$, where $u$ is the harmonic part of $v$ defined below in \eqref{eq:harm}.
In this note, 
we focus primarily on the latter class of domains, which we refer to as \emph{RL-domains}
(``rational landscape domains'').

If $v$ is the solution to \eqref{eq:landscape}, 
it is easy to see then that $u(z) = v(z) + \frac{|z|^2}{2}$ solves
\begin{equation}\label{eq:harm}
\begin{cases}
\Delta u(z) = 0, & z\in\Omega \\
u(z)= \frac{|z|^2}{2}, & z\in\partial\Omega.
\end{cases}
\end{equation}
Therefore, critical points of $v$ are the same as solutions to 
\[\label{eq:fixed}
\nabla u(z) = z, \quad z\in\Omega.
\]
This is an anti-holomorphic fixed point problem,
placing it within a class of problems that has seen many breakthroughs over the past fifteen years utilizing anti-holomorphic dynamics
(the primary technique in these studies is an adaptation of the Fatou theorem from holomorphic dynamics).

In \cite{KhSw},
the authors used anti-holomorphic dynamics to address a question of Sheil-Small \cite{Sh-Sm}
on the number of zeros of harmonic polynomials.
An extension of this result to the setting of harmonic rational functions in \cite{KhavinsonNeumann}
settled a conjecture \cite{Rhie} in astronomy (concerning gravitational lensing by point-mass configurations).
Transcendental dynamics were used in 
\cite{KhLu2010} to study 
another problem in gravitational lensing
involving elliptical galaxies
(a sharp result was obtained in \cite{BergErem2010}).
Utilizing quasiconformal surgery, the use of
anti-holomorphic dynamics as an indirect technique was further adapted
to study the topology of
quadrature domains in \cite{LeeMak}.
Anti-holomorphic dynamics involving elliptic functions were used in \cite{BergErem2016}
to give much shorter and greatly simplified proofs of the results in \cite{LW1}, \cite{LW2}, \cite{LW3} on the number of critical points of Green's function on a torus.
Transcendental dynamics were used again in
\cite{BergErem2018} to completely classify the number of zeros of certain complex equations involving polynomials and logarithms.
Prior to these applications,
the study of anti-holomorphic dynamics had been initiated in \cite{Crowe}
(see also \cite{Sch1}, \cite{Sch2}).

Among the studies mentioned above, the one \cite{BergErem2016} concerning
the critical points of Green's function on a torus is especially relevant.
The ``Green's function'' in that context is not harmonic but solves a Poisson equation with constant source term (plus a delta-mass).
Taking a sublevel set of (the negation of) this Green's function (in order to remove a neighborhood of the logarithmic pole)
and adding a constant so that the resulting function vanishes on the boundary of the sublevel set,
we obtain the landscape function of a domain on a torus.
The results of \cite{BergErem2016}
give a complete description of the possible number of critical points.

\subsection{Outline of the paper}
We review some preliminary results in Section \ref{sec:prelim}.
In Section \ref{sec:RL}
we prove an upper bound on the number of critical points of $v$
for so-called RL-domains,
and in Section \ref{sec:quad}
we prove an upper bound
for certain quadrature domains.
We consider some additional examples  in Section \ref{sec:examples},
and we conclude with some open problems
in Section \ref{sec:concl}.

\subsection*{Acknowledgements}
We thank Alexandre Eremenko, Dmitry Khavinson, and Svitlana Mayboroda for helpful comments.

\section{Preliminaries}\label{sec:prelim}

For the next two results
we refer to \cite[Thm. 3.3 and Cor. 3.4]{AlMa}.

\begin{thm}\label{thm:Morse}
Let $\Omega$ be a 
bounded domain in the plane 
whose boundary $\p \Omega$ consists of
$k$ connected components each of class $C^{1,\alpha}$.
If the critical points of the landscape function $v$
are isolated, then
$S - M = k-2$,
where $S$ and $M$ denote
the number of saddles and maxima
(respectively) of $v$.
\end{thm}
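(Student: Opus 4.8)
The plan is to set up a Poincaré--Hopf / Morse-theoretic index count adapted to the vector field $\nabla v$ on $\Omega$. First I would observe, as already recorded in the excerpt, three facts about $\nabla v$: it is a smooth vector field on $\overline\Omega$; by the Hopf lemma it is nonvanishing on each boundary component and points in the inward normal direction there (since $\partial\Omega$ is a level set); and by superharmonicity ($\Delta v=-2<0$) it has no local minima in $\Omega$, so every isolated critical point is either a local maximum or a saddle. Under the hypothesis that the critical points are isolated, there are finitely many of them (compactness of $\overline\Omega$), so the count is well defined.

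Next I would compute the relevant indices. At a nondegenerate maximum the index of $\nabla v$ is $+1$; at a nondegenerate saddle it is $-1$. If one only assumes isolated (not necessarily nondegenerate) critical points, one should instead argue that superharmonicity forces the index at each critical point to be $\le 1$, and more precisely that a ``maximum-type'' critical point has index $+1$ while any other isolated critical point of a superharmonic function has index $\le -1$; one can reduce to this by noting that near an isolated critical point $z_0$ the function $v(z)-v(z_0)$ is superharmonic with an isolated zero of its gradient, and its level sets near $z_0$ look like those of a saddle unless $z_0$ is a strict local max. This is the step I expect to be the main obstacle: handling possibly degenerate critical points of a general superharmonic (not real-analytic, not Morse) function and pinning down the index. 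If one is willing to restrict to the Morse (nondegenerate) case the statement $S-M=k-2$ comes out on the nose; for the general isolated case one expects the $+1$ at each max and $\le -1$ at each saddle-type point, and one must check that equality is what is being asserted — so I would either invoke the precise index computation from \cite{AlMa} or argue that the sum of indices over saddle-type points equals $-S$ by a local deformation argument.

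Then I would apply the Poincaré--Hopf theorem in the version for a compact surface with boundary and an inward-pointing vector field on the boundary: for such a field the sum of the interior indices equals the Euler characteristic $\chi(\overline\Omega)$. A bounded planar domain whose boundary has $k$ components is homotopy equivalent to a wedge of $k-1$ circles, so $\chi(\overline\Omega)=1-(k-1)=2-k$. (Equivalently, cap off the $k$ boundary circles with disks to get a closed surface of genus $0$ with Euler characteristic $2$, then subtract $k$ for the removed disks.) One should double-check the boundary-orientation convention: here $\nabla v$ points \emph{into} $\Omega$, which is the convention under which $\sum \text{ind} = \chi$ (as opposed to the outward convention giving $\chi$ as well for even-dimensional manifolds — dimension $2$, so both give the same; the subtlety is really about whether the field is transverse to $\partial\Omega$, which Hopf's lemma guarantees).

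Finally, combining the pieces: $\sum_{\text{crit pts}} \mathrm{ind}_{z_j}(\nabla v) = \chi(\overline\Omega) = 2-k$. In the Morse case the left side is $M\cdot(+1) + S\cdot(-1) = M - S$, so $M - S = 2-k$, i.e. $S - M = k - 2$, as claimed; in the general isolated case one substitutes the index values justified in the previous step to reach the same identity. I would present the argument in the nondegenerate case in detail and remark that the isolated case follows by the index computation of \cite[Thm.~3.3]{AlMa}, on which the statement is explicitly based.
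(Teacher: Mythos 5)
Your proposal is correct and follows essentially the same route as the paper, which states this result by citing \cite[Thm.~3.3]{AlMa} and remarks that it follows from superharmonicity (no minima), the Hopf lemma (so $\nabla v$ is inward-normal and nonvanishing on $\partial\Omega$), and the Poincar\'e--Hopf index theorem with $\chi(\overline\Omega)=2-k$. You also correctly isolate the one genuine subtlety --- the index at isolated but possibly degenerate critical points --- and appropriately defer that computation to the cited reference, just as the paper does.
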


As explained in the introduction, the above result can be viewed as a consequence of superharmonicity, the Hopf Lemma, and the Poincare-Hopf index theorem\footnote{Among complex analysts, the index theorem is often referred to as the ``generalized argument principle'' \cite[Sec. 2.5]{Sh-Sm}.  Alternatively, one can use Morse theory instead of index theory.}.

\begin{thm}\label{thm:finite}
Let $\Omega$ be a 
bounded simply-connected domain in the plane with boundary $\p \Omega$ of class $C^{1,\alpha}$.
Then the number $N$ of critical points of the landscape function $v$ is finite.
\end{thm}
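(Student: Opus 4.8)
The plan is to reduce the statement to an analyticity argument for the harmonic conjugate. Recall from \eqref{eq:harm} that $u(z) = v(z) + |z|^2/2$ is harmonic in $\Omega$ and continuous up to $\partial \Omega$, and from \eqref{eq:fixed} that the critical points of $v$ are exactly the solutions of $\nabla u(z) = z$, which in complex notation reads $\overline{\partial_z(2u)}$, i.e. writing $h = \partial u / \partial z$ (a holomorphic function on $\Omega$), the critical point equation becomes $2\,\overline{h(z)} = \bar z$, equivalently $\overline{h(z)} = \bar z / 2$, equivalently the anti-holomorphic fixed point equation $h(z) = z/2$ after conjugating — so critical points of $v$ are the zeros in $\Omega$ of the holomorphic function $g(z) := h(z) - z/2$. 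Thus it suffices to show that $g$ is holomorphic on $\Omega$ and does not vanish identically; finiteness of $N$ then follows from the identity theorem, since the zeros of a non-identically-zero holomorphic function on a domain are isolated, and they must further be bounded away from $\partial \Omega$ — here is where we use $C^{1,\alpha}$ regularity and the Hopf lemma, which (as already noted in the introduction) guarantee $\nabla v \neq 0$ on $\partial\Omega$, so in fact the zero set of $g$ is a closed subset of a compact set with no limit points, hence finite.

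The two points requiring care are: (i) $g$ is genuinely holomorphic on all of $\Omega$ — this is immediate since $v\in C^\infty(\Omega)$ so $u$ is harmonic and $h = u_x - i u_y$ (up to the usual factor) is holomorphic, and $z/2$ is entire; and (ii) $g \not\equiv 0$. For (ii) I would argue by contradiction: if $g \equiv 0$ then $\nabla v \equiv 0$ on $\Omega$, forcing $v$ to be constant, contradicting $v > 0$ in $\Omega$ and $v = 0$ on $\partial\Omega$ (using that $\Omega$ is nonempty and connected with nonempty boundary). So $g$ is a nontrivial holomorphic function.

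The remaining issue — and the one I expect to be the main obstacle — is controlling the behavior of the critical points near $\partial \Omega$. The zeros of $g$ in $\Omega$ are isolated by (ii), but a priori they could accumulate at the boundary, which would give infinitely many. To rule this out one wants $\nabla v$ to be nonvanishing on $\partial \Omega$ \emph{and} for this to persist in a full one-sided neighborhood, or at least for the zeros not to cluster there. The clean way is to invoke boundary regularity: since $\partial \Omega \in C^{1,\alpha}$, elliptic regularity (Schauder estimates up to the boundary) gives $v \in C^{1,\alpha}(\overline\Omega)$, so $\nabla v$ extends continuously to $\overline\Omega$; the Hopf lemma then gives $\nabla v \neq 0$ on $\partial\Omega$, and by continuity $\nabla v \neq 0$ in a neighborhood (in $\overline\Omega$) of $\partial \Omega$. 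Hence the zero set of $g$ is contained in a compact subset $K \subset \Omega$, and being a discrete subset of the compact set $\overline K$, it is finite. This completes the argument. The subtlety to handle carefully is precisely the up-to-the-boundary regularity: one should cite the appropriate Schauder boundary estimate (e.g. Gilbarg–Trudinger) for the $C^{1,\alpha}$ domain, and note that simple-connectivity, while stated in the hypothesis, is not actually essential for this finiteness conclusion — it is the boundary regularity plus Hopf that does the work.
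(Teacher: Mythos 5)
Your reduction contains a conjugation error that changes the nature of the problem. Writing $h = \partial u/\partial z$, the correct identity is $\nabla u = u_x + i u_y = \overline{2h(z)}$, so the critical point equation $\nabla u(z) = z$ reads $2\,\overline{h(z)} = z$, i.e.\ $h(z) = \bar z/2$. This is an \emph{anti-holomorphic} fixed point equation (exactly as emphasized in the introduction around \eqref{eq:fixed}): the function whose zeros you must count is $h(z) - \bar z/2$, which is harmonic but not holomorphic, and the identity theorem for holomorphic functions does not apply to it. Your chain of manipulations (``$2\overline{h(z)} = \bar z$, \dots equivalently $h(z)=z/2$ after conjugating'') silently replaces $z$ by $\bar z$ on the right-hand side and thereby converts the problem into a holomorphic one, for which finiteness would be immediate --- but that is precisely what is false in general. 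The zero set of $h(z) - \bar z/2$ with $h$ holomorphic can be a real-analytic curve: taking $h(z) = 1/(2z)$ (i.e.\ $u = \log|z| + C$) gives $h(z) - \bar z/2 = (1-|z|^2)/(2z)$, which vanishes on the whole unit circle. This is exactly the annulus example discussed in Section \ref{sec:prelim}, where the critical set of $v$ is a circle even though the boundary is real-analytic and the Hopf lemma applies. For the same reason your closing claim that simple-connectivity ``is not actually essential'' is wrong: the annulus satisfies every hypothesis of your argument except simple-connectivity and has infinitely many critical points.

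What is actually needed --- and what the cited argument from \cite{AlMa} together with the Wilmshurst-type structure theorem supplies --- is the description of the zero set of a harmonic, non-identically-zero function: zeros can accumulate at an interior point only along an analytic arc, and there are at most finitely many such arcs unless the function vanishes identically. One must then rule out the occurrence of such an arc of critical points when $\Omega$ is simply connected; this is the step where simple-connectivity genuinely enters, and it is entirely absent from your proposal. Your boundary analysis (Schauder regularity up to the $C^{1,\alpha}$ boundary plus the Hopf lemma to prevent accumulation of critical points at $\partial\Omega$) is sound and is indeed one of the two ingredients of the intended proof, but by itself it only prevents clustering at the boundary, not in the interior.
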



If the assumption that
$\Omega$ is simply-connected is removed then the critical set
may be infinite.
For instance, in the case of an annulus the critical set is 
a circle.
More generally, 
prescribing an arbitrary
non-singular
analytic Jordan curve $\gamma$,
one may construct as follows an annular neighborhood $\Omega$ of $\gamma$
whose landscape function
$v$ has $\gamma$ as its critical set.
Let $S(z)$ denote the Schwarz function of $\gamma$.
Then $\ol{S(z)}$ is an anti-conformal reflection function that fixes $\gamma$ \cite{Davis}.

The function $\ol{S(z)} - z$ is the gradient
$\nabla w$ of the (unique) solution
$w$ to the Cauchy problem
\begin{equation}
\begin{cases}
\Delta w(z) = -2, & z \text{ near } \gamma \\
w(z) = 0, \nabla w(z) = 0, & z\in \gamma.
\end{cases}
\end{equation}
For $z$ near $z_0 \in \gamma$,
we have that $\ol{S(z)}$
is, up to first order,
the same as standard reflection over the tangent line to $\gamma$ at $z_0$
(see \cite[p. 47]{Davis}).
Thus, we have $\ol{S(z)} - z = -2 |z-z_0| \hat{n} + O(|z-z_0|^2),$
where $\hat{n}$ denotes the unit normal vector to $\gamma$ at $z_0$.
This implies (by expressing $w(z)$ as a path integration
of $\nabla w$)
that $w(z)<0$ for all $z$ sufficiently near
(but not on) $\gamma$.
Choose $\Omega = \{ w(z) > -\e \}$
with $\e>0$ sufficiently small.
Then $v(z) = w(z) + \e$
solves \eqref{eq:landscape}
and carries $\gamma$ as its critical set.

\begin{remark}\label{rmk:generic}
The above pathological examples might seem plentiful since we were free to prescribe the analytic curve $\gamma$ of critical points.
However, we point out that these examples are unstable under perturbation.
Namely, following a standard argument from Morse theory \cite[Lemma 2.21]{Matsumoto} we can obtain a Morse function (Morse functions have isolated nondegenerate critical points) by adding an arbitrarily small \emph{linear} perturbation to $v$.
Since the perturbation is linear, the perturbed $v$ still solves the same PDE, and with an appropriate corresponding  perturbation of $\Omega$ the vanishing boundary condition is also satisfied.
Using further tools from Morse theory \cite[Lemma 2.26]{Matsumoto}, one can also show that the ``good'' domains whose landscape function $v$ is a Morse function (and hence has isolated nondegenerate critical points) are stable under perturbation.
We do not pursue these genericity results in detail as they are not needed for the primary class of domains we consider (where we can more directly rule out pathological examples, see the proof of Theorem \ref{thm:RL}).
\end{remark}

\section{The number of critical points of $v$: 
an upper bound for RL-domains}\label{sec:RL}


Let us consider the special class of domains that were introduced in \cite{FleemanKhavinson}, for which the solution to the harmonic Dirichlet problem \eqref{eq:harm} with data $|z|^2$ is the real part of a function whose derivative is rational in $z$.

\begin{defn}\label{RL}
We say that a domain $\Omega$ is an \emph{RL-domain} (``rational landscape domain'') if the boundary $\partial \Omega$ of $\Omega$ is a subset of 
$$\{z: |z|^2 = f(z) + \overline{f(z)}\},$$
where the derivative $f'(z)$ of $f(z)$ is a rational function.
Given a domain $\Omega$ of this type, we define the \emph{order} of $\Omega$ to be the degree of the derivative $f'(z)$ of the associated function $f$.
\end{defn} 

We note that even though these domains have a special form, they are dense in the space of Jordan domains with smooth boundary, see Proposition \ref{p1}
below. Also, note that for an RL-domain, the landscape function is given by $v(z) = \Re f(z) - \frac{|z|^2}{2}.$ The following is directly related to a result of Khavinson and Neumann \cite{KhavinsonNeumann} on the number of solutions of an equation of the form $r(z)+\bar{z}=0,$ where $r$ is a rational function of degree $n\geq 2.$

\begin{thm}\label{thm:RL}
Suppose $\Omega = \{z \in \C: f(z) + \ol{f(z)} -|z|^2 > 0 \}$ is a smooth RL-domain of order $n\geq 2,$ and connectivity $k$.
If $\Omega$ is not an annulus,
then the number $N$ of critical points of $v$ satisfies
$$ N \leq 4n + k - 6.$$
\end{thm}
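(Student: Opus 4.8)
The plan is to translate the critical-point equation into an anti-holomorphic fixed point problem and then apply the Khavinson--Neumann bound, being careful about three distinct issues: the count of \emph{all} solutions versus those inside $\Omega$, the possibility of non-isolated critical points, and the bookkeeping that converts the raw algebraic bound into one involving the connectivity $k$.

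\medskip

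\emph{Step 1: Set up the anti-holomorphic equation.} For an RL-domain we have $v(z) = \Re f(z) - \tfrac{|z|^2}{2}$, so the harmonic companion is $u(z) = \Re f(z)$ and $\nabla u$ corresponds (under the usual identification $\nabla u \leftrightarrow 2\,\partial u/\partial z$) to $\overline{f'(z)}$. Thus the critical points of $v$ in $\Omega$ are precisely the solutions in $\Omega$ of
\begin{equation}\label{eq:rlfixed}
\overline{f'(z)} = z, \qquad \text{equivalently} \qquad \overline{r(z)} = z,
\end{equation}
where $r := f'$ is rational of degree $n \geq 2$. Taking conjugates, this is exactly the equation $r(z) + \bar z = 0$ considered by Khavinson and Neumann (after replacing $r$ by $-r$, which does not change the degree), so the total number of solutions in $\C$ is at most $5n-5$.

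\medskip

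\emph{Step 2: Rule out a continuum of solutions.} I must first know that the critical set is finite before index-theoretic bookkeeping makes sense; this is also where the hypothesis ``$\Omega$ is not an annulus'' should be used. If \eqref{eq:rlfixed} had infinitely many solutions in $\Omega$, then $\overline{r(z)} - z$ would vanish on a set with an accumulation point. Writing this out, $r(z)$ would have to agree on a curve with the Schwarz function of that curve, forcing (by an argument as in the annulus discussion preceding Remark~\ref{rmk:generic}, or directly via the identity theorem applied to $r(z) = \overline{z}$ along an analytic arc) $\partial\Omega$ to consist of circles or lines — i.e.\ $\Omega$ would be a disk or an annular-type region, contradicting the standing assumptions. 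I expect this step to be the main obstacle: the paper's own Remark~\ref{rmk:generic} flags that pathological (non-isolated) examples exist, so the argument must genuinely exploit the rationality of $f'$ together with the excluded cases, rather than invoking generic perturbation. In particular one should carefully check that order $n\ge 2$ together with ``not an annulus'' is exactly what kills the continuum.

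\medskip

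\emph{Step 3: Separate boundary solutions from interior ones, and invoke the index count.} By the Hopf lemma (as recalled in the introduction) $\nabla v$ does not vanish on $\partial\Omega$, so no solution of \eqref{eq:rlfixed} lies on $\partial\Omega$; hence the $\le 5n-5$ total solutions split into those inside $\Omega$ and those outside $\overline\Omega$. To bring in $k$, I would argue that at least $k-1$ solutions must lie in the complement: each bounded complementary component of $\Omega$ is itself a domain on whose boundary $\nabla v$ makes a full revolution (since $\partial\Omega$ is a level set and $\nabla v$ is normal there), and a Poincaré--Hopf / argument-principle count on each such component — $k-1$ of them, as $\Omega$ has connectivity $k$ — forces the vector field $\overline{r(z)}-z$ to have a net index there, producing at least one solution of \eqref{eq:rlfixed} in each. (Alternatively, the $k-1$ "missing" solutions can be located by comparing the total degree $5n-5$ computed over all of $\C$ with the Morse/index identity $S-M=k-2$ from Theorem~\ref{thm:Morse} applied inside $\Omega$.) Subtracting these $k-1$ exterior solutions from $5n-5$ does not yet give the claimed bound, so the final refinement is:

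\medskip

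\emph{Step 4: Sharpen the Khavinson--Neumann count using the geometry at infinity.} The bound $5n-5$ is sharp for general $r$, but here $r = f'$ and the defining relation $|z|^2 = f(z)+\overline{f(z)}$ on $\partial\Omega$ constrains the behavior of $r$ near its poles and near $\infty$ in a way that removes several of the possible solutions. Concretely, I would count the solutions of $\overline{r(z)}=z$ that are forced to lie at or near each pole of $r$ (these correspond to points outside $\overline\Omega$, near $\partial\Omega$'s "missing pieces") and show at least $n-1$ of the $5n-5$ solutions are accounted for there, leaving at most $4n-4$ genuinely available; combined with the $k-1$ exterior solutions from Step~3 (which must be disjoint from the ones just counted — a point needing care) one gets $N \le (4n-4) - (k-1) \cdots$, which after the correct bookkeeping yields $N \le 4n+k-6$. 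The precise accounting of which solutions are exterior and which are forced to cluster at poles is the delicate part; I would organize it by applying the argument principle to $\overline{r(z)}-z$ on a large circle and on small circles around each pole, and reconcile the resulting winding numbers with $S - M = k-2$ inside $\Omega$.
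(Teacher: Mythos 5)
Your Steps 1 and 2 are fine and match the paper: the critical points are the solutions of $\overline{f'(z)}=z$, and the finiteness of the critical set follows because a continuum of solutions would force $f'$ to be the Schwarz function of an analytic arc, hence (being rational) a circle, hence $\Omega$ an annulus. But Steps 3 and 4 are where the proof genuinely breaks down, and the bookkeeping you propose cannot be repaired. You start from the full Khavinson--Neumann count of $5n-5$ solutions of $r(z)+\bar z=0$ in all of $\C$ and try to subtract exterior solutions ($k-1$ of them, plus some clustered at poles). Any such subtraction scheme produces a bound that \emph{decreases} in $k$, whereas the true bound $4n+k-6$ \emph{increases} in $k$ (and is attained with all $5n-5$ solutions inside $\Omega$ precisely when $k=n+1$ is maximal). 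Already for $k=1$ your Step 3 gives $N\le 5n-5$, weaker than the claimed $4n-5$, and your Step 4 arithmetic $(4n-4)-(k-1)=4n-3-k$ never equals $4n+k-6$ for any integer $k$. You admit the accounting is unresolved; it is not a matter of care but of a wrong mechanism.

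The missing idea is to bound the number of \emph{maxima} rather than the total number of solutions. At a critical point $z_0$ where $v$ attains a maximum, the Hessian of $v$ --- which is the Jacobian of the harmonic map $\overline{F(z)}-z$ with $F=f'$ --- has non-negative determinant $1-|F'(z_0)|^2$, so $|F'(z_0)|\le 1$ and $z_0$ is a \emph{non-repelling} fixed point of $\overline{F}$. Proposition 1 of Khavinson--Neumann bounds the number of non-repelling fixed points of an anti-rational map of degree $n$ by $2n-2$, giving $M\le 2n-2$. The index identity $S-M=k-2$ (Theorem \ref{thm:Morse}), which you mention only parenthetically, then does the rest: $S\le 2n+k-4$ and $N=M+S\le 4n+k-6$. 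Note this uses the index theorem \emph{inside} $\Omega$ to convert a bound on maxima into a bound on saddles; no counting of solutions outside $\Omega$ is needed at all.
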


\begin{proof}
The assumption that $\Omega$
is not an annulus
guarantees that the critical set is finite.
Indeed, suppose the critical set is infinite.
Then $\nabla v = \ol{f'(z)} - z$ must vanish on an analytic arc $\gamma$ (see the proof of Corollary 3.4 in \cite{AlMa}).  We notice that the Schwarz function of $\gamma$ is $f'(z)$.
Since $f'(z)$ is rational, this implies that $\gamma$ is a circle \cite[Ch. 10]{Davis} and, after a complex linear change of variables, we have $f'(z) = 1/z$.  Hence $u(z) = \log |z| + C$, for $C$ a constant, so that the set $|z|^2/2 = u(z)$ defines an annulus.

The locations where maxima of $v$ occur correspond to non-repelling fixed points of the function $\ol{F(z)}$, where $F(z) = f'(z)$.
Indeed, if $z_0$ is a critical point
then we have that $\nabla v = 0$,
or equivalently $\ol{f'(z_0)}=z_0$.
i.e., $z_0$ is a fixed point of $\ol{F(z)}$.
Moreover, 
if $v$ attains a maximum at $z_0$
then 
the determinant of its Hessian matrix is non-negative
at $z_0$.
The Hessian matrix of $v$
is the Jacobian matrix of the harmonic mapping
$\ol{F(z)} - z$,
and the Jacobian determinant 
of this harmonic mapping is $1-|F'(z)|^2$.
Thus, if a maximum occurs at $z_0$, then
$1 - |F'(z_0)|^2 \geq 0$,
or equivalently, $|F'(z_0)| \leq 1$.
i.e., $z_0$ is a non-repelling fixed point of $\ol{F(z)}$.

This along with \cite[Prop. 1]{KhavinsonNeumann}
implies that the number $M$ of maxima of $v$
is at most $2n-2$.
Applying Theorem \ref{thm:Morse},
we then have 
\begin{align*}
    S &= M + k - 2 \\
    &\leq (2n-2) + k-2 \\
    &= 2n+k-4,
\end{align*}
where $S$ denotes the number of
saddles of $v$ in $\Omega$.
Thus, the total number $N$ of critical
points satisfies
$$N = M+S \leq 2n-2 + 2n+k-4 = 4n+k-6,$$
as desired.
\end{proof}

\begin{remark}
Requiring that the boundary is smooth in Theorem \ref{RL} is not too restrictive of a condition. The boundary $\partial\Omega$ of an RL-domain is given by $\{z: \Re(f) = \frac{|z|^2}{2}\},$ i.e., a level curve of a real analytic function $\psi$. If $\p \Omega$ is non-singular, i.e., $\nabla \psi$ does not vanish on $\p \Omega$, then  $\partial\Omega$ is smooth.
\end{remark}

\noindent {\bf Extremal examples:}
We exhibit an RL-domain of order $n \geq 2$ and connectivity $k=n+1$ for which the above bound is attained, so that the number of critical points is exactly $4n+k-6 = 5n-5.$ Our construction is an adaptation of the
extremal examples from gravitational lensing
that were constructed by S. Rhie \cite{Rhie}. For simplicity we assume $n \geq 3$ (the case $n=3$ can be dealt with separately).  Let 
\begin{equation}\label{eq:w}
w(z) = \epsilon\log|z| + \sum_{j=1}^{n-1}\log|z-a_j|- \frac{|z|^2}{2},
\end{equation} 
where $a_j = a e^{2\pi i j / (n-1)}$ for $j=1,...,n-1$, with $a > 0$ small (and depending on $n,$ see \cite{Bleher}), and with $a >> \epsilon > 0$.  For $T \gg 1$, consider the set 
$\Omega_T = \{z: w(z) + T > 0\}.$ All the conditions in Definition \ref{RL} hold, so that $\Omega_T$ is a bounded RL-domain of order $n$ and connectivity $n+1$. A simple verification yields that the landscape function of $\Omega_T$ is $v(z) = w(z) + T$, and
$f'(z) = \frac{\epsilon}{z} + \sum_{j=1}^{n-1}\frac{1}{z-a_j}$ is the associated rational function. As we observed before, the equation $\nabla v(z) =0$ is the same as $f'(z) = \overline{z}.$ 
By the known result \cite{Rhie}, this latter equation has $5n - 5$ solutions in the plane. A priori these critical points may not all be contained in $\Omega_T$, but we can ensure that this is the case by making $T$ larger. Therefore for this particular choice, the number of critical points of $v$ is exactly $5n-5$ which concludes the proof. 

It is more difficult to verify the existence of examples (conjectured below) that attain the bound for each $n$ and with every connectivity $k=1,2,...,n+1$,
but based on computer simulations a promising strategy is as follows: first perturb the coefficients of the logarithmic terms in \eqref{eq:w} in order to break the symmetry (and ensure the saddles have distinct heights), and then obtain domains of each connectivity by 
gradually decreasing the level $T$
so that the connectivity of $\Omega$ experiences $n$ bifurcations at $n$ different heights corresponding to saddle points.  Such bifurcations could, a priori, disconnect $\Omega$,
and the difficulty is to show that $\Omega$ in fact remains connected as $T$ crosses the heights of the $n$ highest saddles.

\begin{conj}
For every $n\geq 2,$ and for each $k=1,2,...,n+1$ there exists an RL-domain $\Omega$ of order $n$ and connectivity $k$ whose landscape function has exactly $4n + k -6$ critical points in $\Omega$.
\end{conj}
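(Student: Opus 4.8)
The plan is to prove the conjecture by deforming the order‑$n$, connectivity‑$(n+1)$ extremal example constructed just above: I would decrease the level $T$ so that the connectivity of $\Omega_T$ drops by exactly one at $n$ successive saddle levels, without ever disconnecting $\Omega_T$ and without losing any of its maxima, and then read off an example of every intermediate connectivity. Concretely, fix $n\geq 2$ and begin with a Rhie configuration as in \eqref{eq:w}, for which $f'(z)=\epsilon/z+\sum_{j=1}^{n-1}1/(z-a_j)$ has degree $n$ and $\ol{f'(z)}=z$ has exactly $5n-5$ solutions, all simple; for $T$ large these all lie in $\Omega_T=\{w>-T\}$, and since $\Omega_T$ then has connectivity $k=n+1$, Theorem \ref{thm:RL} forces precisely $M=2n-2$ (nondegenerate) maxima and $S=3n-3$ saddles. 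Replacing the logarithmic weights by generic nearby values preserves the RL‑structure, the order $n$, and the count $5n-5$ (the number cannot increase by Khavinson--Neumann and cannot decrease since the roots are simple), and can be arranged so that the $5n-5$ critical values of $w$ are pairwise distinct. I keep the notation $w$, $\Omega_T$ for this perturbed potential; $\Omega_T$ is then a bounded smooth RL‑domain of order $n$ for every regular value $-T$, with connectivity $n+1$ for $T$ large.

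Now decrease $T$. The topology of $\Omega_T$ changes only when $-T$ crosses a critical value of $w$, and, crucially, as long as $\Omega_T$ is connected and has at least one hole the next critical value crossed is necessarily a saddle value: just below a maximum value the superlevel set near that maximum is an isolated topological disk (every adjacent saddle has strictly smaller height), so crossing a maximum value while $\Omega_T$ is connected would force $\Omega_T$ itself to be that disk, contradicting $k\geq 2$. Hence each bifurcation we meet is the pinching of a neck at a saddle, and since $\Omega_T$ is planar there are exactly two possibilities, both raising $\chi(\Omega_T)$ by one: either the neck joins two distinct boundary components, so that $\Omega_T$ stays connected and its connectivity drops by one (``two holes, or a hole and the outer boundary, merge''), or the two walls of the neck lie on the same boundary component, so that $\Omega_T$ disconnects. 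If the first $n$ bifurcations are all of the former type then $\Omega_T$ runs through connectivities $n+1,n,\dots,1$ at $n$ successive saddle levels while retaining all $2n-2$ maxima; choosing a regular value $-T$ between two consecutive of these critical levels produces, for each $k\in\{1,\dots,n+1\}$, a bounded smooth RL‑domain $\Omega_T$ of order $n$ and connectivity $k$, whose critical set (contained in the finite solution set of $\ol{f'(z)}=z$) is finite, so $\Omega_T$ is not an annulus, and whose landscape function $v=w+T$ has the $2n-2$ retained maxima together with $3n-3-(n+1-k)=2n-4+k$ retained saddles --- at least $4n+k-6$ critical points in $\Omega_T$. Theorem \ref{thm:RL} then forces equality, proving the conjecture.

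Everything thus reduces to one topological assertion: for a suitable perturbed Rhie configuration, the $n$ lowest bifurcations of $\Omega_T$ are neck‑pinches joining distinct boundary components, not ones that sever the ``land.'' This is the step I expect to be the real obstacle and the reason the statement is only conjectural. In the exactly symmetric configuration the low saddles are the passes separating neighbouring wells --- between consecutive peripheral poles $a_j,a_{j+1}$, and between the central well and the peripheral ring --- and one expects these to be hole‑merging; but the $(n-1)$‑fold symmetry makes entire orbits of $n-1$ such saddles share one critical level, so these degenerate multi‑pinches must be resolved by the perturbation in a controlled order (for instance so that the $n-1$ peripheral holes first coalesce along a chain of $n-2$ pinches, then merge with the central hole, then with the exterior --- $n$ pinches in all). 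Showing that the resolved saddles can be ordered this way, and that no intermediate pinch disconnects $\Omega_T$, cannot be handled by a soft genericity argument of the kind used for the pathological examples in Section \ref{sec:prelim}, since superharmonicity of $w$ does not by itself prevent a connected superlevel set from pinching. One is therefore led to analyze the Reeb graph (merge tree) of $w$ at and near the symmetric configuration and to track its deformation under the perturbation --- and also to treat the small cases ($n=2$, and perhaps $n=3$) by hand, as in the construction above. I expect this Reeb‑graph analysis to be the main difficulty.
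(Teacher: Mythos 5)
There is no proof of this statement to compare against: in the paper it is precisely a conjecture, accompanied only by the heuristic you have reproduced (perturb the symmetric Rhie configuration \eqref{eq:w} to separate the saddle heights, then lower the level $T$ so that the connectivity drops by one at $n$ successive saddle values), together with the explicit caveat that such a bifurcation ``could, a priori, disconnect $\Omega$.'' Your proposal follows this same route, and the parts you do carry out are sound: the count $M=2n-2$, $S=3n-3$ for large $T$ via Rhie's $5n-5$ solutions and Theorem \ref{thm:Morse}; the stability of the count $5n-5$ under small generic perturbation of the weights (simple zeros cannot be destroyed, Khavinson--Neumann caps the total); the observation that while $\Omega_T$ is connected with $k\geq 2$ the next critical value crossed must be a saddle value; the planar dichotomy at a saddle crossing (merge two distinct complementary components, or disconnect $\Omega_T$); and the final bookkeeping $2n-2$ maxima plus $2n-4+k$ saddles, with Theorem \ref{thm:RL} forcing equality.

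But the argument is not a proof, and you say so yourself: everything hinges on the assertion that, for a suitable perturbation, the first $n$ bifurcations are all of the hole-merging type --- equivalently, that the superlevel set never pinches off a piece of itself and that the $n$ lowest critical values crossed are exactly the saddles whose removal reduces the connectivity from $n+1$ down to $1$ while every maximum stays above the level. This is exactly the difficulty the paper identifies as the reason the statement is left as a conjecture, and your proposal replaces it with a program (resolve the symmetric multi-saddle levels in a prescribed order, track the Reeb/merge tree of $w$ under the perturbation, handle $n=2,3$ by hand) rather than an argument; as you note, superharmonicity alone does not rule out a self-pinch, and no soft genericity argument supplies the required ordering of the resolved saddle values. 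A secondary point to watch if you pursue this: Theorem \ref{thm:RL} is stated for $\Omega$ equal to the full superlevel set $\{f+\ol{f}-|z|^2>0\}$, so your scheme genuinely needs connectedness of that whole set at each intermediate level, not merely a chosen component --- which is again the same unproved step. So the proposal is a faithful elaboration of the paper's heuristic, with the central gap still open.
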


Note that $k$ cannot exceed $n+1$
since each bounded component of the complement of an RL-domain contains
a singularity of $f'$ 
(see \cite[Thm. 3]{FleemanKhavinson}).

We show some examples of extremal domains and their landscape functions in Figures \ref{fig:order3} and \ref{fig:4connected}.
The domain on the left in Figure \ref{fig:order3}
is given by
$$\Omega_L =  \left\{ z \in \C : |z|^2/2 - \frac{3}{4} \sum_{k=0}^{2} \log|z - e^{2 k \pi i/3}| < 1/2 \right\},$$
and the landscape function is
$v_L(z) = 1/2 - |z|^2/2 + \frac{3}{4} \sum_{k=0}^{2} \log|z - e^{2k \pi i/3}|$.

The domain on the right in Figure \ref{fig:order3}
is given by
$$\Omega_R =  \left\{ z \in \C : |z|^2/2 - \frac{3}{4} \sum_{k=0}^{2} \log|z - e^{2k \pi i/3}| < 2/5 \right\},$$
and the landscape function is
$v_R(z) = 2/5 - |z|^2/2 + \frac{3}{4} \sum_{k=0}^{2} \log|z - e^{2k \pi i/3}|$.

The domain shown in Figure \ref{fig:4connected}
is given by
$$\Omega = \left\{ z \in \C : |z|^2/2 - \frac{1}{3} \sum_{k=0}^{4} \log|z - e^{2k \pi i/5}| - \frac{9}{40} \log|z| = 1/2 \right\},$$
and the landscape function is
$$v(z) = 1/2 - |z|^2/2 + \frac{1}{3} \sum_{k=0}^{4} \log|z - e^{2k \pi i/5}| + \frac{9}{40} \log|z|.$$

\begin{figure}
    \centering
    \includegraphics[scale=0.4]{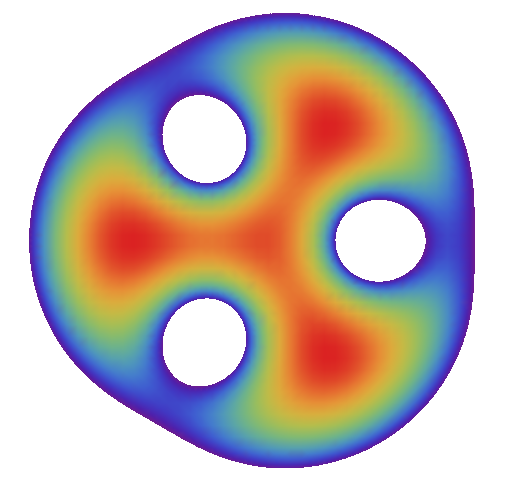}
        \includegraphics[scale=0.29]{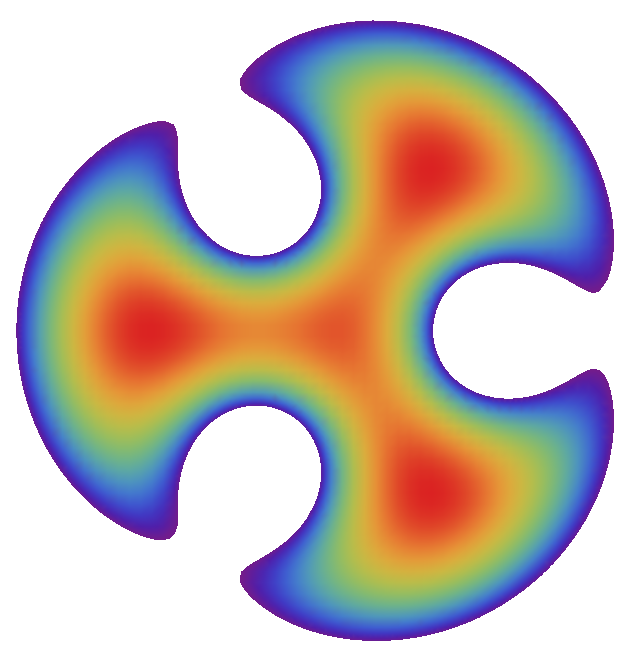}
    \caption{Left: An  RL-domain of order $n=3$ and connectivity $k=4$ whose torsion function has $4$ maxima and $6$ saddles. Right: An RL-domain of order $n=3$ and connectivity $k=1$ whose torsion function has $4$ maxima and $3$ saddles.}
    \label{fig:order3}
\end{figure}

\begin{figure}
    \centering
        \includegraphics[scale=0.55]{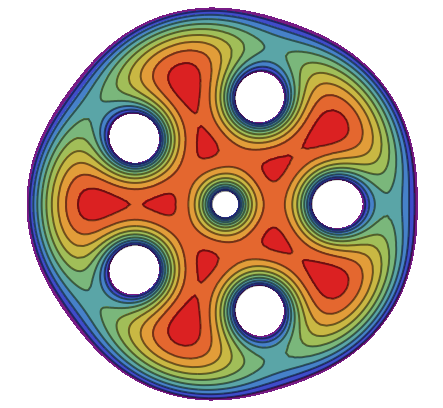}
    \caption{An RL-domain of order $n=6$ and connectivity $k=7$ along with contour plot of the landscape function $v$.  
    The landscape function has $10$ maxima and $15$ saddles.}
    \label{fig:4connected}
\end{figure}

\begin{prop}\label{p1}
Let $\Omega\subset\mathbb{C}$ be a bounded simply connected domain whose  boundary $\partial{\Omega}$ is analytic. Then, for any $\e > 0$ there is an RL-domain $D$ such that

$$d_H(D, \Omega) < \e,$$ where $d_H(A, B)$ denotes the Hausdorff distance between the sets $A$ and $B.$
\end{prop}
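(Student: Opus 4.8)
The plan is to approximate the ``exact'' holomorphic potential of $\Omega$ by a polynomial and then to perturb the zero level set of the resulting landscape function. Write $u_\Omega=v_\Omega+|z|^2/2$ for the harmonic function in $\Omega$ with boundary data $|z|^2/2$, and let $f_\Omega$ be the holomorphic primitive of $F:=2\partial_z u_\Omega$ in $\Omega$ normalized so that $\Re f_\Omega=u_\Omega$; then $v_\Omega=\Re f_\Omega-|z|^2/2$, as in the discussion following Definition~\ref{RL}. Since $\partial\Omega$ is analytic and $v_\Omega$ is smooth up to $\partial\Omega$, the reflection principle for solutions of $\Delta v=-2$ across an analytic arc shows that $v_\Omega$ continues as a real-analytic solution to a neighborhood $W$ of $\overline\Omega$; equivalently, with $S$ the Schwarz function of $\partial\Omega$ the function $h(z):=zS(z)/2$ is holomorphic near $\partial\Omega$ and real-valued there, so $\Re(f_\Omega-h)$ vanishes on $\partial\Omega$ and Schwarz reflection extends $f_\Omega-h$, hence $f_\Omega$ and $F=f_\Omega'$, holomorphically past $\partial\Omega$. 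In particular $F$ is holomorphic on $W$.

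Next, fix a compact neighborhood $K$ of $\overline\Omega$ with $K\subset W$ and $\C\setminus K$ connected. By Runge's theorem there are polynomials $F_N\to F$ uniformly on $K$; let $f_N$ be the polynomial primitive of $F_N$ normalized by $\Re f_N(z_0)=u_\Omega(z_0)$ for a fixed $z_0\in\Omega$, so that $f_N\to f_\Omega$ uniformly on $K$ and hence $v_N:=\Re f_N-|z|^2/2\to v_\Omega$ in $C^1(K)$ (recall that $\nabla v_N$ corresponds to $\overline{F_N(z)}-z$, which converges uniformly to $\overline{F(z)}-z$). Each $v_N$ solves $\Delta v_N=-2$ and its zero set lies in $\{z:|z|^2=f_N(z)+\overline{f_N(z)}\}$ with $f_N'=F_N$ rational, so any domain whose boundary is a component of $\{v_N=0\}$ is an RL-domain by Definition~\ref{RL}.

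It remains to extract such a domain close to $\Omega$, and this is the one delicate step. As recalled in the introduction, $v_\Omega>0$ in $\Omega$ and, by the Hopf lemma, $\nabla v_\Omega\neq 0$ on $\partial\Omega$. Choose $\delta\in(0,\e)$ so small that $T_\delta=\{z:\operatorname{dist}(z,\partial\Omega)<\delta\}$ is a genuine tubular neighborhood of $\partial\Omega$ contained in $K$, on which $v_\Omega$ has no critical points and crosses $0$ transversally, and such that $v_\Omega\geq c_\delta>0$ on $\Omega\setminus T_\delta$. For $N$ large, $C^1$-closeness forces $v_N$ to have no critical points in $T_\delta$, to be positive on $\Omega\setminus T_\delta$, and to be negative on the outer boundary of $T_\delta$; writing $T_\delta$ in normal coordinates $(s,t)\in\partial\Omega\times(-\delta,\delta)$, one sees that $\{v_N=0\}$ meets a neighborhood of $\overline\Omega$ in a single analytic Jordan curve $\gamma_N$, a graph $t=t_N(s)$ with $\sup_s|t_N(s)|\to 0$. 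Let $D_N$ be the Jordan domain bounded by $\gamma_N$ (the one containing $\Omega\setminus T_\delta$). Then $D_N\triangle\Omega\subseteq T_\delta$, so $d_H(D_N,\Omega)\leq\delta<\e$, and $\partial D_N=\gamma_N\subseteq\{z:|z|^2=f_N(z)+\overline{f_N(z)}\}$, so $D_N$ is an RL-domain; taking $D=D_N$ for $N$ large finishes the proof. The main obstacle is exactly this last step --- guaranteeing that for large $N$ the zero set of $v_N$ near $\partial\Omega$ is still a single well-behaved Jordan curve enclosing a domain close to $\Omega$, rather than fragmenting or spawning spurious components --- and it is there that the non-degeneracy of $v_\Omega$ along $\partial\Omega$ (positivity in $\Omega$ together with the Hopf bound $\nabla v_\Omega\ne 0$) is used.
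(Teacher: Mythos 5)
Your proposal is correct and follows essentially the same route as the paper: extend the harmonic part past the analytic boundary via the Schwarz function, approximate by Runge's theorem with $C^1$ control, and use the Hopf lemma (non-vanishing gradient near $\partial\Omega$) to conclude that the perturbed zero level set is a single Jordan curve bounding an RL-domain within Hausdorff distance $\e$. The only cosmetic difference is that you approximate the derivative $F=f_\Omega'$ by polynomials and integrate, whereas the paper approximates the analytic completion $g$ by a rational function and controls $g'$ via Cauchy's estimate; both yield the same $C^1$ closeness.
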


\begin{proof}
Let $w$ be the landscape function of $\Omega$. Then $u(z) = w(z) + \frac{|z|^2}{2}$ solves
\begin{equation}
\label{eq:harm2} \begin{cases}
\Delta u(z) = 0, & z\in\Omega \\
u(z)= \frac{|z|^2}{2}, & z\in\partial\Omega.
\end{cases}
\end{equation}
Since $\partial{\Omega}$ is analytic, $u$ has a harmonic extension (using the Schwarz function) to a bounded simply connected domain $\widetilde\Omega$ which contains $\Omega$ as a proper subdomain. Let $g = u + iv$ be an analytic completion of $u$ in $\widetilde\Omega$. Let $2\mu = dist(\bar{\Omega},\partial\widetilde\Omega)$ and $\Omega_1 = \{z\in\widetilde\Omega: dist(z, \bar{\Omega}) < \mu\}$. Let $\delta>0$ be a constant to be chosen later. By Runge's theorem, we can find a rational function $R(z)$ whose poles lie outside $\Omega_1$ with $||g-R||_{\Omega_1} < \delta$. By Cauchy's estimate, $||g'-R'||_{\bar{\Omega}} < \frac{\delta}{\mu}$. In particular,

$$||u-\Re R||_{\bar{\Omega}}< \delta,\hspace{0.1in} ||\nabla{u}-\nabla{\Re R}||_{\bar{\Omega}} < \frac{\delta}{\mu}.$$

Setting $\tau(z) = \Re R(z) - \frac{1}{2}|z|^2$, we obtain from the above displayed equation that
\begin{equation}\label{eq1}
||w-\tau||_{\bar{\Omega}}< \delta,\hspace{0.1in} ||\nabla{w}-\nabla{\tau}||_{\bar{\Omega}} < \frac{\delta}{\mu}  .
\end{equation}

Now let $\e >0$ be given. We want to show that there exists an RL-domain which approximates $\Omega$ within Hausdorff distance $\e.$ Recall that $\{w=0\} = \partial\Omega$ and $\{w > 0\} = \Omega$. From \eqref{eq1}, we get \begin{equation}\label{eq2}
\Omega_{\delta}=\{-\delta < w < \delta\}
\end{equation}
contains a component of $\{\tau = 0\}$. The
Hopf lemma tells us that $|\nabla{w}|\neq 0$ near $\partial\Omega$. Choose $\delta = \delta(\e) > 0$ small enough so that 

\begin{equation}\label{eq3}
|\nabla{w}|\geq C > 0 \hspace{0.05in} \mbox{on}\hspace{0.05in} \Omega_{\delta}, \hspace{0.1in} d_H(\Omega_{\delta}, \p \Omega) < \e.
\end{equation}
 Such a choice is possible because of the non vanishing nature of the gradient which implies continuity of the level sets for small $\delta.$ Combining this with the second estimate from \eqref{eq1}, we obtain that $|\nabla{\tau}|\neq 0$ on $\Omega_\delta$. In particular $\{\tau=0\}$ is a Jordan curve that bounds a simply connected RL-domain, say $D$. By further reducing $\delta$ if necessary, we obtain from \eqref{eq3} that $d_{H}(D, \Omega)< \e. $

\end{proof}

\section{Quadrature domains}\label{sec:quad}

Recall that a bounded domain $\Omega \subset \C$ is called a \emph{quadrature domain} if it admits a formula expressing the area integral of any integrable analytic function $g$
as a finite sum of weighted point evaluations of $g$ and its derivatives:
\begin{equation}\label{eq:QF1}
 \int_{D} {g(z) dA(z)} = \sum_{m=1}^{L} \sum_{k=0}^{n_m}{a_{m,k}g^{(k)}(z_m)},
\end{equation}
where the points  $z_m \in \Omega$ and constants $a_{m,k}$ are each independent of $g$.
The \emph{order} of the quadrature domain $\Omega$ is defined as
the sum $\sum_{m=1}^L n_m$.

Quadrature domains have been studied extensively
in function theory, potential theory, and fluid dynamics,
see \cite{GuSh} for a survey.

We conjecture that the size of the critical set of $v$ is bounded by a function that is linear in the order $n$ of $\Omega$ and the connectivity $k$ of $\Omega$.

\begin{conj}\label{conj:quad}
Let $\Omega$ be a quadrature domain of order $n$
and connectivity $k$.
The number $N$ of critical points
of $v$ satisfies
$ N \leq 2n-2 + k$.
\end{conj}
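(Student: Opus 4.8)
The natural plan is to follow the architecture of the proof of Theorem~\ref{thm:RL} but to replace the estimate $M\le 2n-2$ on the number of maxima by a sharp one adapted to quadrature domains. Exactly as in Section~\ref{sec:RL}, set $h(z):=2\partial_z v(z)+\bar z$; since $\partial_{\bar z}\partial_z v=\tfrac14\Delta v=-\tfrac12$, the function $h$ is holomorphic in $\Omega$, and the critical points of $v$ are the solutions in $\Omega$ of the anti-holomorphic fixed-point equation $\overline{h(z)}=z$. First I would rule out a non-isolated critical set: if $\overline{h(z)}-z$ vanished on an analytic arc $\gamma\subset\Omega$, then the Schwarz function of $\gamma$ would be $\overline{h}$, which (using that $h$ extends to the Schottky double, see below) forces $\gamma$ to be a line or a circle and $\Omega$ to be degenerate; this case is excluded just as the annulus was in the proof of Theorem~\ref{thm:RL}. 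With finiteness in hand, Theorem~\ref{thm:Morse} gives $S-M=k-2$, hence $N=2M+k-2$, and the conjecture becomes equivalent to the single inequality $M\le n$, where $M$ is the number of local maxima of $v$ and $n$ is the order of $\Omega$ (equivalently, the number of poles of the Schwarz function $S$ of $\partial\Omega$ in $\Omega$, counted with multiplicity).

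The heart of the matter is to bring in the quadrature-domain structure. Recall that $S$ is meromorphic in $\Omega$ and that the coordinate $z$ extends across $\partial\Omega$ to a meromorphic function on the Schottky double $\widehat\Omega$ of $\Omega$, a compact Riemann surface of genus $k-1$ (see \cite{GuSh}). I would show that $h$ likewise extends to a meromorphic function $\mathcal H$ on $\widehat\Omega$: writing $u:=v+|z|^2/2$ for the modified Schwarz potential, one has $u=\tfrac12|z|^2=\tfrac12 z\,S(z)$ on $\partial\Omega$, so $u=\Re\!\big(\tfrac12 z\,S(z)\big)-\beta$ in $\Omega$, where $\beta$ is the harmonic function vanishing on $\partial\Omega$ whose singularities at the nodes cancel those of $\Re(\tfrac12 zS(z))$; for a quadrature domain the relevant Green's functions and their nodal derivatives are meromorphic on $\widehat\Omega$ (they are abelian differentials of the third kind), so $h=2\partial_z u=\tfrac12\big(S(z)+z\,S'(z)\big)-2\partial_z\beta$ lies in the function field of $\widehat\Omega$. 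Two features should then be extracted: all poles of $\mathcal H$ lie on the ``back'' half $\sigma(\Omega)$ of the double, because $h$ is holomorphic in $\Omega$ itself; and the sharp degree bound $\deg\mathcal H\le n$. In addition, the extension of $v$ by odd reflection across $\partial\Omega$ (where $v$ vanishes) should translate into a symmetry of $\mathcal H$ under the anti-holomorphic involution $\sigma$ of $\widehat\Omega$.

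Finally, a local maximum $z_0$ of $v$ is exactly a point with $\overline{h(z_0)}=z_0$ and $|h'(z_0)|\le 1$, i.e.\ a non-repelling fixed point of $z\mapsto\overline{h(z)}$; using the identity $\bar z=\mathcal S(\sigma(\cdot))$ on the double (with $\mathcal S$ the extension of $S$), such points lift to the non-repelling fixed points, lying on the front of $\widehat\Omega$, of an anti-holomorphic correspondence built from $\mathcal H$, $\mathcal S$ and $\sigma$. I would bound their number by a Fatou-type argument on the compact surface $\widehat\Omega$, in the spirit of Khavinson--Neumann's count for $\overline{r(z)}=z$ on $\CP^1$ \cite{KhavinsonNeumann} and of Bergweiler--Eremenko's analysis on the torus \cite{BergErem2016}, charging each immediate basin against the critical data of $\mathcal H$ and using that $\mathcal H$ has no poles on the front. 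This last step is the expected obstacle: the naive Fatou bound (the number of non-repelling fixed points of a degree-$D$ anti-holomorphic self-map on a genus-$g$ surface is governed by its $\sim 2D+2g-2$ critical points) only recovers an estimate of the same strength as Theorem~\ref{thm:RL}, so obtaining the factor-two improvement to $M\le n$ --- with, crucially, no dependence on $g=k-1$ --- appears to require genuinely exploiting that the fixed points in question are the maxima of a superharmonic function, which has no interior minima, rather than those of an abstract dynamical system; the degree bound $\deg\mathcal H\le n$, the treatment of parabolic fixed points, and the handling of cusps of $\partial\Omega$ are further points that would need care. As a consistency check, the reduction should match the conjectured extremal configurations --- chains of $n$ nearly disjoint disks joined by thin necks, whose landscape functions have exactly $M=n$ maxima.
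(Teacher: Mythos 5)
The statement you were asked about is not a theorem of the paper: it is stated there as an open problem (Conjecture \ref{conj:quad}), supported only by a heuristic extremal construction (chains of tangent disks) and by the much weaker partial result of Theorem \ref{thm:quad}, which gives the quadratic bound $N\leq(2n-1)^2$ even in the special case of a simply connected quadrature domain with one node. So there is no proof in the paper to compare against, and your proposal does not close the gap either: it is a program whose decisive step you yourself flag as unresolved. The reduction you perform is fine --- once finiteness of the critical set is known, Theorem \ref{thm:Morse} gives $N=2M+k-2$, so the conjecture is exactly the inequality $M\leq n$ --- but everything after that is where the actual content lies, and it is missing.

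Concretely: (i) the degree bound $\deg\mathcal H\leq n$ for the extension of $h=2\partial_z u$ to the Schottky double is asserted, not proved; the natural count of poles of $\mathcal H$ on the back half of the double is not obviously $n$ and may involve the connectivity. (ii) Even granting it, the Fatou-type argument you invoke charges each non-repelling fixed point to a critical point of the map, and by Riemann--Hurwitz a degree-$D$ meromorphic function on a genus-$g$ surface has $2D+2g-2$ critical points; with $D\leq n$ and $g=k-1$ this yields at best $M\lesssim 2n+2k-4$, i.e.\ a bound of the same strength as Theorem \ref{thm:RL}, not the factor-two improvement $M\leq n$ with no dependence on $k$ that the conjecture requires --- and that improvement is precisely the open part. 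Moreover, what you propose to iterate is a correspondence assembled from $\mathcal H$, $\mathcal S$ and $\sigma$, not a self-map of a fixed compact surface, so the Khavinson--Neumann \cite{KhavinsonNeumann} and Bergweiler--Eremenko \cite{BergErem2016} machinery does not apply as stated (in those works one iterates $\overline{r(z)}$ on $\CP^1$ or an honest anti-holomorphic self-map of a torus). (iii) Your finiteness step also has a hole: if the critical set contained an analytic arc $\gamma$, then $h$ would be the Schwarz function of $\gamma$, but meromorphy of $h$ on the Schottky double does not force $\gamma$ to be a line or circle; the corresponding step in the proof of Theorem \ref{thm:RL} uses rationality of $f'$, which you do not have here. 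In short, the framing on the double and the identity $N=2M+k-2$ are sensible, but the essential inequality $M\leq n$ is not established, so the statement remains what the paper says it is: a conjecture.
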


The topology of quadrature domains was studied in \cite{LeeMak}
where it was shown that
$k \leq 2n-4$ for $n \geq 3$.
Thus, if the above conjecture is true, then we have $N \leq 4n-6$, for $n \geq 3$.

Let us describe a heuristic for constructing examples that attain the conjectured bound (intuition for the conjecture is based on our belief that this construction is extremal).
We recall an interpretation of quadrature domains in terms of logarithmic potential theory.
If $\Omega$ is a quadrature domain
(and suppose the quadrature formula holds for harmonic test functions as well as analytic ones)
then the logarithmic potential generated by area measure over $\Omega$ coincides (outside of $\Omega$) with that which is generated by a finite system of point charges (possibly including dipoles and higher order multi-poles)
situated at the quadrature nodes $z_1,z_2,...,z_L$.
With this formulation, the order of the quadrature domain is the number of poles (counting multi-poles with multiplicity).
Consider a collection of $n$ disjoint disks with each tangent to at least one other disk.
The logarithmic potential generated by the area measure on these disks is equivalent to that of a system of point charges.
Slightly increasing each point charge by $\e>0$, the corresponding quadrature domain (that generates the same logarithmic potential as the system of increased charges) contains the closure of the original set of disks, and for $\e>0$ small we expect the landscape function $v$ to have $n$ maxima.
The number $S$ of saddles is then given by $S = n+k-2$ so that the total number of critical points attains the conjectured bound of $2n+k-2$.
Below, we will perform a detailed analysis of the simplest nontrivial instance of this construction, a quadrature domain whose logarithmic potential is equivalent to two equal charges,
see Section \ref{sec:examples}.

The following result provides an upper bound for $N$ (yet with quadratic growth in $n$) in the special case when $\Omega$ is simply-connected and there is a single node appearing in the quadrature formula,
i.e., we have $L=1$ in \eqref{eq:QF1}.
In this case it is known \cite{AhSh} that the conformal mapping is a polynomial of degree $n$,
and by making a translation we may assume that the point evaluation in \eqref{eq:QF1} is at $\phi(0) = 0$.

\begin{thm}\label{thm:quad}
Let $\Omega$ be a smooth, simply-connected quadrature domain of order $n$ with $L=1$, i.e., the conformal mapping
$\phi:\D \rightarrow \Omega$ is a polynomial of degree $n$.
Then the number $N$ of critical points of $v$ in $\Omega$ satisfies
$N \leq (2n-1)^2.$
\end{thm}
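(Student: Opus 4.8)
The plan is to transfer the critical point equation $\nabla v(z) = z$ to the disk via the conformal map $\phi: \D \to \Omega$ and count solutions of the resulting analytic/anti-analytic equation by an argument-principle (Poincaré–Hopf) count, exactly in the spirit of Theorem \ref{thm:RL} but now without the benefit of a rational $f'$. Write $u = v + |z|^2/2$, which is harmonic with $u = |z|^2/2$ on $\p\Omega$, and let $g = u + i\tilde u$ be an analytic completion, so that $2\p u/\p z = g'(z)$ and the critical point equation becomes $\overline{g'(z)} = z$, i.e., $g'(z) = \bar z$. Since $\Omega$ is a quadrature domain with a single node, the Schwarz function $S(z)$ of $\p\Omega$ is meromorphic in $\Omega$ with a single pole (of order $n$) at $0 = \phi(0)$; and on $\p\Omega$ we have $\bar z = S(z)$, while $g'(z)$ agrees with the holomorphic function whose real part solves the Dirichlet problem, so that $g'(z) - S(z)$ is \emph{bounded} (indeed extends analytically past $\p\Omega$) near the boundary. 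The key structural fact is that $h(z) := g'(z)$ extends to a meromorphic function on $\Omega$ with a single pole of order $n$ at $0$: this follows because $g'(z) - S(z)$ extends harmonically (hence, being analytic, holomorphically) across $\p\Omega$ by the reflection afforded by the Schwarz function, and $S(z)$ itself is meromorphic in $\Omega$ with its only pole at $0$ of order $n$.

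Next I would pull everything back to the disk. Set $\Psi(\zeta) = h(\phi(\zeta))$ and $\Phi(\zeta) = \overline{\phi(\zeta)}$ appropriately; the equation $h(z) = \bar z$ on $\Omega$ becomes, with $z = \phi(\zeta)$ and $\zeta \in \D$, the equation $\Psi(\zeta) = \overline{\phi(\zeta)}$. Here $\phi$ is a polynomial of degree $n$, so $\overline{\phi(\zeta)}$ is anti-polynomial of degree $n$ in $\zeta$; and $\Psi(\zeta) = g'(\phi(\zeta))$ is meromorphic in $\D$ with a single pole at $\zeta = 0$ of order $n$ (the pole order is preserved because $\phi'(0) \neq 0$ for a univalent polynomial — one must note $\phi$ being univalent forces $\phi'$ nonvanishing on $\overline{\D}$ in this $L=1$ setting, or handle the general case by a small perturbation). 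Multiplying through by $\zeta^n$ (to clear the pole) and by a suitable power to clear denominators, the equation $\Psi(\zeta) = \overline{\phi(\zeta)}$ is equivalent inside $\D$ to $P(\zeta) = \overline{Q(\zeta)}$ where $P$ is a polynomial of degree at most $2n-1$ and $Q$ a polynomial of degree at most $n$; more precisely one arrives at an equation of the form "polynomial of degree $2n-1$ equals conjugate of polynomial of degree $n$," i.e., a harmonic polynomial equation of total degree $2n-1$. By Bézout-type bounds for harmonic polynomials (the Wilmshurst-type bound, as used in \cite{KhSw}), the number of solutions of a harmonic polynomial equation $p(\zeta) - \overline{q(\zeta)} = 0$ with $\deg p = 2n-1$ and $\deg q \leq n < 2n-1$ is at most $(\deg p)^2 = (2n-1)^2$, provided the left side is not identically zero. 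Finiteness (non-degeneracy) is guaranteed here because $\Omega$ is not an annulus: if the critical set were infinite, $\overline{f'(z)} - z$ would vanish on an analytic arc whose Schwarz function would be $g'(z)$, meromorphic with a single pole — one checks this forces $\Omega$ to be a disk, contradicting order $n \geq 1$ with a genuine polynomial $\phi$, or is consistent only with a trivial case handled directly.

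The main obstacle is establishing cleanly that $h(z) = g'(z)$ is meromorphic on all of $\Omega$ with a pole only at the single quadrature node, of order exactly $n$ — this requires pinning down the interaction between the harmonic reflection across $\p\Omega$ (to get analytic continuation of $g'(z) - S(z)$ past the boundary) and the global behavior of the Schwarz function inside $\Omega$, together with ruling out extra poles of $g'$ in $\Omega$ (there are none, since $g'$ is holomorphic in $\Omega$; the poles come entirely from $S(z)$, and in the pulled-back picture they are collected at $\zeta = 0$). A secondary technical point is bounding the degree of the polynomial $P$ that results after clearing the order-$n$ pole and the factor $\phi'(\zeta)$ (which has degree $n-1$): here the arithmetic $n + (n-1) = 2n-1$ is what produces the bound $(2n-1)^2$, and one must verify no cancellation or growth pushes this higher. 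Once the meromorphic structure and the degree count are in hand, the bound follows immediately from the harmonic-polynomial root count of \cite{KhSw}, with the non-annulus hypothesis supplying the needed finiteness.
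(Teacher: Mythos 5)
Your overall strategy---pull the equation $\nabla u = z$ back to the disk via $\phi$, clear denominators to get a polynomial equation in $(\zeta,\bar\zeta)$ of total degree $2n-1$, and finish with a Bezout-type count---is the same as the paper's. But the structural claim you build it on is false: $g'$ is \emph{holomorphic} in $\Omega$ (you even note this yourself at the end), so $\Psi=g'\circ\phi$ has no pole at $\zeta=0$ and there is nothing to clear by multiplying by $\zeta^n$. The pole of order $n$ at the node belongs to the Schwarz function $S$, not to $g'$, and you cannot import it into the interior equation $g'(z)=\bar z$ because $S(z)=\bar z$ only \emph{on} $\partial\Omega$; the heuristic that $g'-S$ continues across $\partial\Omega$ says nothing about the behavior of $g'$ at the quadrature node. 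The correct structural input, which the paper takes from \cite{FlLu}, is that $\hat u=u\circ\phi=P+\overline{P}$ with $P$ a \emph{polynomial} of degree $n-1$; equivalently $g'(\phi(\zeta))\,\phi'(\zeta)=2P'(\zeta)$ is a polynomial of degree $n-2$, and the pulled-back critical point equation becomes $\overline{P'(\zeta)}-\phi(\zeta)\overline{\phi'(\zeta)}=0$.

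This also breaks your endgame. The cleared equation is not of the harmonic-polynomial form $p(\zeta)-\overline{q(\zeta)}=0$: the term $\phi(\zeta)\overline{\phi'(\zeta)}$ contains genuinely mixed monomials $\zeta^j\bar\zeta^k$ with $j,k\geq 1$, so the Wilmshurst/Khavinson--\'Swi\c{a}tek count for $p-\bar q$ does not apply. What does apply is the general Bezout bound for the system of real and imaginary parts (two real polynomials of degree $2n-1$), restricted to $\D$ and combined with finiteness of the zero set---which here follows simply from Theorem \ref{thm:finite} because $\Omega$ is simply connected, with no need for the annulus discussion. Your arithmetic $n+(n-1)=2n-1$ lands on the right number by accident: even granting your premises, multiplying $\Psi(\zeta)=\overline{\phi(\zeta)}$ by $\zeta^n$ would produce the mixed term $\zeta^n\overline{\phi(\zeta)}$ of total degree $2n$, not an anti-polynomial of degree $n$, so your claimed reduction would give $(2n)^2$ rather than $(2n-1)^2$. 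To repair the proof you must establish the polynomial form of $\hat u$ (or equivalently of $g\circ\phi$) and then invoke the domain-restricted Bezout theorem (Theorem \ref{thm:Bezout}) rather than a harmonic-polynomial zero count.
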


The proof will require the following alternate form of Bezout's theorem.

\begin{thm}\label{thm:Bezout}
Let $A$ and $B$ be polynomials
in the real variables $x$ and $y$ with real coefficients with $\deg A$ = $n_1$ and $\deg B$ = $n_2$.
If $A$ and $B$ have finitely many common zeros in a domain $\Omega$ then they have at most $n_1 \cdot n_2$ common zeros in $\Omega$.
\end{thm}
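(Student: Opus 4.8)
This is the real-variable (affine, localized) form of Bezout's theorem, and the plan is to reduce it to the classical complex Bezout theorem by complexifying, treating separately the contribution of a common factor. Regard $A$ and $B$ as elements $\tilde A,\tilde B$ of $\C[x,y]$. Since $\R[x,y]$ is a unique factorization domain, let $d=\gcd(\tilde A,\tilde B)$ be computed in $\R[x,y]$; then $\tilde A_1:=\tilde A/d$ and $\tilde B_1:=\tilde B/d$ have no common factor even in $\C[x,y]$, since a common irreducible complex factor together with its complex conjugate would produce a common real factor. Put $g=\deg d$, so that $\deg\tilde A_1=n_1-g$ and $\deg\tilde B_1=n_2-g$, and note the set identity
\[
\{A=0\}\cap\{B=0\}=\{d=0\}\cup\bigl(\{\tilde A_1=0\}\cap\{\tilde B_1=0\}\bigr).
\]
Writing $N$ for the number of common zeros of $A$ and $B$ in $\Omega$, it therefore suffices to bound the number of points of $\Omega$ lying on each of the two pieces on the right.

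For the coprime piece this is the classical affine Bezout theorem: since $\tilde A_1$ and $\tilde B_1$ have no common factor in $\C[x,y]$, their common zero set in $\C^2$ is finite with at most $(\deg\tilde A_1)(\deg\tilde B_1)=(n_1-g)(n_2-g)$ points, so at most $(n_1-g)(n_2-g)$ common real zeros of $A$ and $B$ lie there.

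The piece $\{d=0\}$ requires more care. Let $S=\{d=0\}\cap\Omega$; it is finite, since each of its points is a common zero of $A=d\tilde A_1$ and $B=d\tilde B_1$ in $\Omega$. Because $\Omega$ is open and $S$ finite, every $p\in S$ is an isolated point of the real zero set of $d$: on a small punctured disk about $p$ the continuous function $d$ is nonvanishing, hence of constant sign, so $p$ is a local extremum of $d$ and $\nabla d(p)=0$. Thus $S\subseteq\{d_x=0\}\cap\{d_y=0\}$ with $\deg d_x,\deg d_y\le g-1$. I would then prove, by strong induction on degree, the lemma that a polynomial in $\R[x,y]$ of degree $g$ with only finitely many real zeros in an open set has at most $g^2$ of them there. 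If $d_x$ and $d_y$ are coprime in $\C[x,y]$, this is immediate: $|S|\le(g-1)^2\le g^2$. If they share an irreducible factor $h$, then $\nabla d$ vanishes identically on $\{h=0\}$; writing $d=h^ke$ with $h\nmid e$ and inspecting $\nabla d$ along $\{h=0\}$ forces $k\ge2$, so $h^2\mid d$ and $2\deg h\le g$, and applying the inductive hypothesis to $h$ and to $e=d/h^2$ (each of degree $<g$) bounds the number of real zeros of $d$ in $\Omega$ by $(\deg h)^2+(g-2\deg h)^2\le g^2$, using $\deg h\le g/2$. (Alternatively one may bound $|S|$ using Harnack's bound on the number of ovals of a smooth level curve $\{d=c\}$, $c$ a small regular value, or simply cite the localized real-variable Bezout bound used in the literature on the valence of harmonic polynomials.)

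Combining the two pieces,
\[
N\le|S|+(n_1-g)(n_2-g)\le g^2+(n_1-g)(n_2-g)=n_1n_2-g\bigl(n_1+n_2-2g\bigr)\le n_1n_2,
\]
the last inequality using $g\le n_1$ and $g\le n_2$. The principal obstacle is the common-factor lemma---bounding the number of \emph{isolated} real zeros of a bivariate polynomial in terms of its degree; the complexification, the gcd splitting, and the appeal to complex Bezout are routine, and this route avoids any genericity hypotheses on the coordinates.
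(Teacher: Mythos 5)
Your overall architecture --- split off $d=\gcd(A,B)$ over $\R$, apply complex Bezout to the coprime parts, bound the isolated real zeros of $d$ separately, and combine via $g^2+(n_1-g)(n_2-g)\le n_1n_2$ --- is sound, and it is essentially the route the paper takes: the paper's proof is a one-line reduction to Wilmshurst's Theorem~2, whose argument is exactly this gcd decomposition, localized to $\Omega$. The one place where you try to be self-contained, however, has a genuine gap. In the inductive proof of your common-factor lemma you claim that if $d_x$ and $d_y$ share an irreducible factor $h$, then ``inspecting $\nabla d$ along $\{h=0\}$ forces $k\ge 2$,'' i.e.\ $h^2\mid d$. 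The inspection only rules out $k=1$; it does not rule out $k=0$. Since $\nabla d$ vanishes on the irreducible complex curve $V(h)$, the restriction of $d$ to $V(h)$ is locally constant on its (connected) smooth locus, hence equal to a constant $c$, and what actually follows is $h^2\mid(d-c)$, not $h^2\mid d$. For instance $d=(xy-1)^2+1$ has $\gcd(d_x,d_y)=xy-1$, which does not divide $d$ at all; and padding this example (e.g.\ $d=(xy-1)^2(2x^2+2y^2-1)+1$, which has a strict local minimum with value $0$ at the origin) shows the failure occurs for polynomials that do possess isolated real zeros, so the gap is not vacuous. Consequently the factorization $d=h^2e$ on which your recursion rests need not exist, and the induction does not close.

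The lemma itself is true and your final bookkeeping survives; you only need a different recursion. The cleanest repair is to observe that every isolated real zero of $d$ in $\Omega$ is a common zero of $d$ and $d_x$ (after discarding the degenerate case $d_x\equiv 0$, where the real zero set is a union of vertical lines and hence empty or infinite in the open set $\Omega$). If $\gcd(d,d_x)=1$ over $\C$, Bezout gives at most $g(g-1)\le g^2$ such points; otherwise $D=\gcd(d,d_x)$ is a proper factor of $d$ of degree between $1$ and $g-1$, the zero set splits as $\{D=0\}\cup\{d/D=0\}$, and induction gives at most $(\deg D)^2+(g-\deg D)^2\le (g-1)^2+1\le g^2$. (Alternatively, reduce to squarefree $d$ and bound the singular points of the reduced complex curve.) Your parenthetical fallback --- citing the localized real Bezout bound from the literature on valences of harmonic polynomials --- is precisely what the paper does, so that route requires nothing further; but as written, the self-contained version of the lemma breaks at the step identified above.
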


\begin{proof}[Proof of Theorem \ref{thm:Bezout}]
The result follows from the exact same argument as given in the proof of \cite[Thm. 2]{Wilm}, but instead of considering all real zeros of a hypothetical common factor $C(x,y)$, one considers only the real zeros of $C(x,y)$ within $\Omega$.
\end{proof}


\begin{proof}[Proof of Theorem \ref{thm:quad}]
Let $u$ denote the solution to \eqref{eq:harm},
and recall that the critical points
of $v$ are solutions to $\nabla u = z$.
We use the conformal mapping $\phi$ to pull back $u$ to a harmonic function $\hat{u}(w) = u(\phi(w))$
that solves
\begin{equation}
 \begin{cases}
\Delta \hat{u}(w) = 0, & w\in\D \\
\hat{u}(w)= \frac{|\phi(w)|^2}{2}, & w\in\partial\D.
\end{cases}
\end{equation}
As shown in \cite[proof of Thm. 1.1]{FlLu}, 
the solution to this Dirichlet problem 
satisfies $\hat{u}(w) = P(w) + \ol{P(w)}$,
where $P$ is a polynomial of degree $n-1$.

The solutions of $\nabla v = 0$,
or $\nabla u(z) = z$ correspond, in $w$ coordinates, to solutions of
$$ (\nabla u ) (\phi(w)) = \phi(w).$$
Since $P'(w) = \nabla \hat{u} (w) = (\nabla u ) (\phi(w)) \cdot \ol{\phi'(w)}$, we rewrite the above as
\begin{equation}
     \frac{\ol{P'(w)}}{\ol{\phi'(w)}} = \phi(w) ,
\end{equation}
or, upon clearing the denominator,
\begin{equation}\label{eq:system}
     \ol{P'(w)} - \phi(w)\ol{\phi'(w)} = 0.
\end{equation}
The critical points of $v$ in $\Omega$ correspond to the zeros of \eqref{eq:system} in the unit disk $\D$.
Since $\Omega$ is simply-connected, 
Theorem \ref{thm:finite} implies that the number of such zeros is finite.
Taking real and imaginary parts in \eqref{eq:system}, 
we arrive at a system of two real polynomial equations in two variables with each polynomial being of degree $2n-1$.
Applying Theorem \ref{thm:Bezout}
we see that there are at most $(2n-1)^2$ many solutions of \eqref{eq:system} in $\D$ and hence $v$ has at most this many critical points in $\Omega$.
\end{proof}

\section{Additional Examples}\label{sec:examples}

In the following example,
we determine the
number of critical points of $v$
for a one-parameter family of simply-connected quadrature
domains of order $n=2$.

\subsection{Neumann's oval}
Let $\Omega\subset\mathbb{C}$ be the domain whose boundary is given by 
$$\{(x,y): (x^2 +y^2)^2 = a^2(x^2 + y^2) + 4x^2\}$$
for some parameter $a>0.$ Let $R = \frac{a + \sqrt{a^2 + 4}}{2}.$ Note that $R>1.$ It is known that $\phi(z) = \frac{1-R^4 + \sqrt{(R^4-1)^2 + 4R^4z^2}}{2Rz}$ maps $\Omega$ conformally onto the unit disk $\mathbb{D}$ and that the landscape function of $\Omega$ is given by \cite{FlSi}

\begin{equation}
v(z) = \dfrac{R^4 - 1}{2R^2} + \dfrac{\Re(z\phi(z))}{R} - \dfrac{|z|^2}{2}.
\end{equation}

\noindent A routine computation gives
\begin{equation}\label{grad}
\nabla v(z) = \overline{\dfrac{2R^2z}{\sqrt{(R^4-1)^2 + 4R^4z^2}}} - z.
\end{equation}
We claim that there exists $R_0 > 1$ such that if $1< R\leq R_0,$   $\nabla v$ has exactly 3 zeros (counting multiplicity), all of which are real, and if $R> R_0$ there is only one zero, at the origin.

 For this, we let $z=t\in\mathbb{R}$  in equation \eqref{grad}. Then we obtain $t\left(2R^2- \sqrt{(R^4-1)^2 + 4R^4t^2}\right) = 0 $
whose solutions are $t=0,$ and the roots of
$$t^2 = \dfrac{4R^4 - (R^4-1)^2}{4R^4}.$$
It now follows that if $4R^4 - (R^4-1)^2\geq 0$, i.e., if $1< R\leq R_0=\sqrt{1+\sqrt{2}},$ equation \eqref{grad} has three real zeros. Namely, one positive root, one negative root, and a root at $0$ if $1< R< R_0, $ and a root of multiplicity $3$ at the origin $t=0,$ if $R= R_0.$ If $R > R_0$ it is clear that the only real root is at $0.$ It remains to be shown that equation \eqref{grad} has no nonreal zeros.

Suppose that $z\notin\mathbb{R}$ is a critical point of $v,$ i.e, 

\begin{equation}\label{g}
 2R^2z = \bar{z}\sqrt{(R^4-1)^2 + 4R^4z^2}.
\end{equation}
We first show that $z$ cannot be purely imaginary. Indeed, substituting $z=it$, $t\neq 0$, into equation \eqref{g} yields $2R^2 = -\sqrt{(R^4-1)^2 - 4R^4t^2}$ which is clearly impossible, since $t$ is supposed to be a real number. Now assume that $z=te^{i\theta}$ is a critical point of $v$, where $\theta\in (0, 2\pi),$ with $\theta\notin\{\frac{\pi}{2}, \pi\}$. Substituting in equation \eqref{g} and squaring, we obtain 

\begin{equation}\label{sq}
4R^4 = e^{-i4\theta}(R^4-1)^2 + 4R^4t^2e^{i2\theta}.
\end{equation}

Equating the imaginary parts in equation \eqref{sq}, we obtain $(R^4-1)^2\sin(4\theta)= -4R^4t^2\sin(2\theta)$. Writing $\sin(4\theta)$ as $2\sin(2\theta)\cos(2\theta)$ and remembering that $\theta\notin\{0, \frac{\pi}{2}, \pi\}$, we obtain

\begin{equation}\label{im}
\cos(2\theta) = -\dfrac{2t^2R^4}{(R^4-1)^2}.
\end{equation}

Similarly, equating the real parts of equation \eqref{sq}, and solving for $\cos(2\theta)$ yields,

\begin{equation}\label{re}
\cos(2\theta) = \dfrac{1}{4(R^4-1)^2}\left( -4R^4t^2 \pm \sqrt{(4R^4t^2)^2 + 8(R^4-1)^2\left[(R^4-1)^2 + 4R^4 \right]}\right).
\end{equation}

Clearly equations \eqref{im} and \eqref{re} are incompatible. This shows that all critical points of $v$ are real and finishes the proof of our claim.

\qed

\begin{remark}
The Neumann's oval is an example of a Hippopede. From the classical theory of plane curves it is known that Hippopedes can be parametrized in polar coordinates \cite[p. 145]{La}, showing that the Neumann's oval is star-shaped with respect to the origin. Our example above therefore furnishes a star-shaped domain in which there are two maxima (near the $2$ nodes) for the landscape function.  In particular, this answers a question in the survey \cite[p. 19]{Magnanini}, where the author asks if there is a unique maximum for the landscape function of a star-shaped domain. As we mentioned before, in the case of convex domains the existence of a unique critical point is well known. 
\end{remark}

\begin{lemma}
For $\e > 0$ denote $N_{\e} = (-1, 1)\times(-\e, \e),$ and let $M\gg 1.$ Let $w: N_{\e}\rightarrow\mathbb{R}$ satisfy 

\[ \begin{cases}
\Delta w(z) = -2, & z\in N_{\e} \\
w(z) =0, & z\in\hspace{0.05in}\mbox{top and bottom sides},\\
w(z) = M, & z\in\hspace{0.05in}\mbox{left and right sides}.
\end{cases}
\]
Then $$\sup_{y\in [-\e, \e]}w(0, y)\to 0\hspace{0.05in}\mbox{as}\hspace{0.05in}\e\to 0.$$
\end{lemma}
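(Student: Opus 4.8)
The plan is to sandwich $w(0,y)$ between two explicit bounds, both tending to $0$: a lower bound coming directly from superharmonicity, and an upper bound coming from an explicit harmonic barrier whose decay in the long ($x$-)direction of the thin rectangle is exponential. The mechanism is the familiar one: a harmonic function vanishing on the top and bottom of a strip of width $2\e$ decays like $e^{-c|x|/\e}$ as one moves away from the short sides, and we encode this by using a barrier of frequency $\lambda\asymp 1/\e$.

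First I would note that since $\Delta w=-2<0$, the function $w$ is superharmonic in $N_\e$, and its boundary data ($0$ on the top and bottom, $M>0$ on the left and right) is everywhere nonnegative; hence by the minimum principle $w\ge 0$ in $N_\e$, so in particular $w(0,y)\ge 0$ for all $y\in[-\e,\e]$. For the upper bound I would introduce, with $\lambda=\frac{\pi}{4\e}$,
\[
\psi(x,y)=(\e^2-y^2)+\frac{M}{\cosh(\lambda)\cos(\lambda\e)}\,\cosh(\lambda x)\cos(\lambda y).
\]
Here the quadratic term satisfies $\Delta(\e^2-y^2)=-2$ and vanishes on the top and bottom, while $\cosh(\lambda x)\cos(\lambda y)=\Re\cosh(\lambda(x+iy))$ is harmonic and positive on $N_\e$ because $\lambda y\in[-\pi/4,\pi/4]$ there; thus $\Delta(w-\psi)=0$ in $N_\e$. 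On the top and bottom sides $w=0\le\psi$; on the left and right sides $w=M$, whereas $\psi\ge \frac{M}{\cosh(\lambda)\cos(\lambda\e)}\cosh(\lambda)\cos(\lambda\e)=M$ since $\e^2-y^2\ge 0$ and $\cos(\lambda y)\ge\cos(\lambda\e)$. By the maximum principle $w\le\psi$ throughout $N_\e$, and evaluating at $x=0$ gives
\[
\sup_{y\in[-\e,\e]}w(0,y)\ \le\ \sup_{y\in[-\e,\e]}\psi(0,y)\ \le\ \e^2+\frac{M}{\cosh(\lambda)\cos(\lambda\e)}\ =\ \e^2+\frac{M\sqrt{2}}{\cosh\!\big(\tfrac{\pi}{4\e}\big)}.
\]
Since $M$ is a fixed constant and $\cosh(\pi/(4\e))\to\infty$ as $\e\to0$, the right-hand side tends to $0$; together with $w(0,y)\ge 0$ this yields $\sup_{y\in[-\e,\e]}w(0,y)\to0$.

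There is no substantive obstacle here; the argument is a single barrier estimate. The only point requiring care is the choice of the frequency $\lambda=\lambda(\e)$: it must grow fast enough that $\cosh(\lambda)\to\infty$, yet remain small enough that $\lambda\e$ stays bounded away from $\pi/2$, so that $\cos(\lambda y)>0$ on the strip and $\psi$ is a genuine positive harmonic barrier. Any such choice works (I took $\lambda=\pi/(4\e)$ purely for the clean constant $\cos(\pi/4)=1/\sqrt2$), and the exponential gain is exactly the thin-strip decay. One could equally split $w=(\e^2-y^2)+h$ with $h$ harmonic, $h=0$ on the top and bottom, $h=M-\e^2+y^2$ on the left and right, and run the same comparison on $h$ directly; this is a cosmetic variant of the above.
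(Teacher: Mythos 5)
Your proof is correct and follows essentially the same route as the paper: subtract the quadratic $\e^2-y^2$ to reduce to a harmonic comparison and dominate by a $\cosh(\lambda x)\cos(\lambda y)$ barrier with $\lambda=\pi/(4\e)$, whose smallness at $x=0$ gives the result. The only differences are cosmetic (you fold the quadratic into a single superharmonic barrier and get the slightly sharper constant $\sqrt2 M$ in place of the paper's $2M$, and you record the trivial lower bound $w\ge 0$ explicitly).
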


We will refer to $N_{\e}$ as an $\e-$neck.

\begin{proof}
Let $w_1(z) = w(z) + y^2 - \e^2.$ Then $\Delta w_1(z) = 0$ in $N_{\e},$ with boundary values
\[ \begin{cases}
w_1(z) =0, & z\in\hspace{0.05in}\mbox{top and bottom sides},\\
w_1(z)\leq M, & z\in\hspace{0.05in}\mbox{left and right sides}.
\end{cases}
\]
If $h$ is defined on $N_{2\e}$ by 
$$h(x, y) = C_{\e}\cosh\left(\frac{\pi x}{4\e}\right)\cos\left(\frac{\pi y}{4\e}\right)$$
where $C_{\e} =\frac{2M}{\cosh(\frac{\pi}{4\e})},$ then $h$ is positive harmonic in $N_{2\e}$. Furthermore, it is easy to see that on $\partial N_{\e}$, $w_1\leq h.$ By the maximum principle, this forces $w_1\leq h$ in $N_{\e}$. This in turn implies,
$$ w(z)\leq h(z) + \e^2 - y^2\leq h(z) + \e^2,\hspace{0.05in} z\in N_{\e}.$$ In particular, taking $z = (0, y)\in N_{\e}$, we obtain 
$$ w(0, y)\leq C_{\e}+ \e^2\to 0\hspace{0.05in}\mbox{as}\hspace{0.05in}\e\to 0$$
\end{proof}

As an application, consider $\Omega,$ a dumbbell shaped domain consisting of two almost discs, attached by an $\e-$neck for a very small $\e >0.$ Let $v$ be the landscape function on $\Omega.$ Then taking $M$ to be a very large number and applying the Lemma, we obtain $v$ is very small in the middle of this neck. Hence $v$ has at least one maximum in each disc, for a total of at least $3$ critical points ($2$ maxima and a saddle) in $\Omega$.

\noindent This construction can be generalized to a region comprised of $n$ almost disks held together by $n-1$ $\e-$necks. This will yield that the landscape function on such a domain has at least $2n -1$ critical points. 

\section{Concluding Remarks}
\label{sec:concl}

\subsection*{Critical points of $v$ and oscillations of $\partial \Omega$}

Let us restrict to the case $\Omega$ is simply-connected,
and also assume that $\partial \Omega$ is smooth and real-analytic.
Then the conformal mapping $\phi: \D \rightarrow \Omega$ from the unit disk extends to be analytic in $\ol{\D}$.
Let $n$ denote the number of
local maxima of $|\phi|$ over $\partial \D$. 
Geometrically, $n$ counts oscillations of the distance from the boundary $\partial \Omega$ to the origin.
Thus an affirmative answer to the following problem would provide an estimate based on a purely geometric feature of $\Omega$.

\begin{prob}
Let $\Omega$ be a simply-connected domain with a conformal mapping $\phi : \D \rightarrow \Omega$ having $n$ boundary oscillations as explained above.
Prove an upper bound for the number $N$ of critical points of $v$ in terms of $n$.
\end{prob}

We conjecture that $N$ increases at most linearly with $n$, and one specific guess for a bound of this form (without yet sufficient evidence to state as a conjecture) is 
\begin{equation}\label{eq:guess}
 N \leq 2n-1.
\end{equation}

Let us explain the intuition behind the guess \eqref{eq:guess}.
Let $u$ be the harmonic function
that solves \eqref{eq:harm},
and let $F(z)$ denote the complex conjugate of $\nabla u$.
The assumptions on $\p \Omega$
imply that $F(z)$ extends analytically to a neighborhood of $\ol{\Omega}$.
The number of critical points of $F$
in $\Omega$ is at most $n-1$.
Indeed, using the conformal mapping 
$\phi:\D \rightarrow \Omega$
we pull back $u$ to a harmonic function $\hat{u}$
that solves
\[\label{eq:pullback} \begin{cases}
\Delta \hat{u}(w) = 0, & w\in\D \\
\hat{u}(w)= \frac{|\phi(w)|^2}{2}, & w\in\partial\D.
\end{cases}
\]
It is known \cite{AlMa94}
that the number of critical points of $\hat{u}$ in $\D$ is at most $n-1$, where as above $n$ denotes the number of local maxima of the boundary data.
If it could be shown that the number $M$ of maxima of $F$
is bounded by the number $n-1$ of critical points of $F$ as in the anti-holomorphic dynamics schema (as used in the proof of Theorem \ref{thm:RL} above),
then the bound (\ref{eq:guess})
would indeed follow from Theorem \ref{thm:Morse}.

Such an application of anti-holomorphic dynamics could be realized by addressing the following problem in quasiconformal surgery.

\begin{prob}\label{prob:ext1}
Find conditions on $\Omega$ that
guarantee $F(z)$ extends to $\C$
as a quasirational mapping of degree $n$.
\end{prob}

By ``quasirational'' we mean
that the extended mapping $F$
is quasiconformally conjugate to a rational mapping
\cite{BrFa}.
Such an extension allows iteration of $F$ and application of anti-holomorphic dynamics.

The following problem concerns a
weaker extension of $F$ that would yet result in an upper bound for $N$ that is quadratic in $n$ (explained below).

\begin{prob}\label{prob:ext2}
Find conditions that
guarantee $F(z)$
extends to $\C$
as a smooth (but not necessarily complex-analytic) proper orientation-preserving mapping of degree $n$ and asymptotic to
$z^n$ as $z \rightarrow \infty$.
\end{prob}

Suppose such an extension holds. 
Then we can estimate the number of critical points of $v$ as follows.
The critical points of $v$
are solutions of the fixed point problem
$$\ol{F(z)} = z $$
which are also
solutions of the fixed point problem
$$\ol{F(\ol{F(z)})}=z,$$
or equivalently, the zero-finding problem
$$\ol{F(\ol{F(z)})} - z=0. $$
Since $\ol{F(\ol{F(z)})} - z$ is orientation-preserving, we can count the number of zeros in a large disk
using the argument principle.
The winding number is $n^2$,
since $\ol{F(\ol{F(z)})}$
is asymptotic to $z^{n^2}$ on the boundary of a large disk.
Hence, $v$ has at most $n^2$ critical points.

\smallskip

\subsection*{Circular domains}

The following conjecture is
partly inspired by the outcome for RL-domains.

\begin{conj}\label{conj:convex}
Suppose $\Omega$ is a $k$-connected circular domain.
Then for $k \geq 3$
the number $N$ of critical points of $v$ satisfies
$$N \leq 5(k-2).$$
\end{conj}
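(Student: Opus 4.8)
The plan is to adapt the anti-holomorphic dynamics argument that proved Theorem~\ref{thm:RL}, replacing the rational Schwarz function machinery with the structure available for circular domains. Let $\Omega$ be a $k$-connected circular domain, so $\partial\Omega$ consists of $k$ disjoint circles $C_0, C_1,\dots,C_{k-1}$, with $C_0$ the outer circle and $C_1,\dots,C_{k-1}$ the inner ones. Let $u$ be the harmonic function solving \eqref{eq:harm} and let $F(z)=\overline{\nabla u(z)}$, so that the critical points of $v$ are exactly the fixed points of $\overline{F(z)}$ in $\Omega$, and (as in the proof of Theorem~\ref{thm:RL}) every \emph{maximum} of $v$ is a fixed point with $|F'(z_0)|\le 1$, i.e.\ a non-repelling fixed point of $\overline{F}$. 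The first step is to write $F$ explicitly enough to identify an extension to the Riemann sphere. On each boundary circle $C_j$, the condition $u=|z|^2/2$ forces, via the Schwarz function $S_j(z)=a_j+\rho_j^2/(z-c_j)$ of the circle $C_j$ (center $c_j$, radius $\rho_j$), a reflection identity: writing $u = \mathrm{Re}\, g$ for a (multivalued, with real periods) analytic completion $g$, one gets $g'(z) = \overline{S_j(z)}$-type relations on each $C_j$. Reflecting $g'$ successively across all $k$ circles generates the Schottky group $\Gamma$ of $\Omega$, and shows that $2F(z)-\overline{z}$-type combinations extend to a $\Gamma$-automorphic function; equivalently $F$ extends meromorphically to the domain $\Omega^*$ obtained by reflecting $\Omega$ once across each $C_j$.

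The key step is then a degree/fixed-point count for $\overline{F}$. I would argue that $F$, suitably extended, behaves like a rational map of degree $2(k-1)$: heuristically, $F(z) - \overline{z}$ has one "pole's worth" of mass at infinity (from the $-|z|^2/2$ term, since $\nabla(|z|^2/2)=\bar z$ contributes a simple reflected pole) and one at each of the $k-1$ bounded complementary components (each inner circle $C_j$ encloses a component of $\mathbb{C}\setminus\Omega$ carrying a logarithmic-type singularity of $g$, exactly as in the RL-domain extremal examples built from $\sum\log|z-a_j|$), for a total comparable to $2(k-1)$ poles after accounting for the reflection doubling. By the Khavinson--Neumann bound \cite{KhavinsonNeumann}, an equation $r(z)+\bar z = 0$ with $\deg r = m \ge 2$ has at most $3m-2$ solutions and at most $2m-2$ non-repelling fixed points. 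Applying the non-repelling bound with $m = 2(k-1)$ gives $M \le 2\cdot 2(k-1) - 2 = 4k-6$ maxima, and then Theorem~\ref{thm:Morse} yields $S = M + k - 2 \le 5k - 8$, hence $N = M + S \le (4k-6)+(5k-8) = 9k-14$. This is weaker than the conjectured $5(k-2)=5k-10$, so the naive degree count must be sharpened: one needs to show that the relevant equation has effective degree only $k-1$ (one pole per \emph{bounded} complementary component, with the reflection not doubling it for the fixed-point count because fixed points lie in $\Omega$, a fundamental-domain-sized region), giving $M \le 2(k-1)-2 = 2k-4$, then $S \le 3k-6$, and $N \le 5k-10 = 5(k-2)$ as claimed; note this parallels exactly the RL bound $4n+k-6$ with $n = k-1$.

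The main obstacle, and the part requiring the most care, is making the degree bookkeeping rigorous: circular domains do not have a rational Schwarz function (the reflection group $\Gamma$ is infinite when $k\ge 3$), so $F$ is genuinely transcendental/automorphic rather than rational, and one cannot directly invoke \cite{KhavinsonNeumann}. I would address this either (i) by realizing $F$ as a quasirational map via quasiconformal surgery in the spirit of Problem~\ref{prob:ext1} --- cutting out small disks around the limit set, filling in, and transporting the anti-holomorphic dynamics (à la \cite{LeeMak}) --- and controlling how surgery affects the fixed-point count, or (ii) by a direct argument-principle computation on the boundary of $\Omega$ using the Hopf-lemma normal-derivative information (as in Theorem~\ref{thm:Morse}) together with an argument-principle count of the \emph{non-repelling} fixed points of $\overline F$ restricted to $\Omega$, bypassing any global extension. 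A secondary subtlety is the degenerate case where the critical set is infinite: for circular domains this should only occur for the annulus ($k=2$, excluded) and possibly certain symmetric configurations, which must be ruled out as in the proof of Theorem~\ref{thm:RL} by noting that an analytic arc of critical points would be forced to be a circle whose Schwarz function matches $F$, contradicting the automorphic structure for $k\ge 3$.
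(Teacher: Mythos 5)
This statement is a \emph{conjecture} in the paper --- no proof is given there, and the tools your argument would need are explicitly flagged by the authors as open problems. Your proposal is a strategy outline rather than a proof, and the gap in it is exactly the gap that keeps the statement conjectural. Concretely: your own bookkeeping produces $N\le 9k-14$, which is weaker than the claimed $5(k-2)=5k-10$, and the step that would close the gap --- ``one needs to show that the relevant equation has effective degree only $k-1$'' --- is asserted, not proved. That assertion is essentially the entire content of the conjecture. There is no justification offered for why the reflection ``does not double'' the degree for the fixed-point count, and the heuristic of ``one pole per bounded complementary component'' is not correct as stated: reflecting $g'$ across one circle $C_j$ produces a simple pole at the reflected point, but the second-generation reflections (reflecting those poles across the other circles) produce further poles, and for $k\ge 3$ this process never terminates. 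The extended $F$ has infinitely many poles accumulating on the limit set of the Schottky group, so it has no well-defined finite degree and the Khavinson--Neumann bound for $r(z)+\bar z=0$ with $r$ rational simply does not apply. Your proposed fix (i) --- quasiconformal surgery to realize $F$ as a quasirational map --- is precisely Problem~\ref{prob:ext1} of the paper, stated there as an unsolved problem; invoking it is circular in the present context. Your proposed fix (ii) --- a direct argument-principle count of non-repelling fixed points without a global extension --- is not something the argument principle can do: the argument principle counts all zeros of $\overline{F(z)}-z$ in a region weighted by orientation, and extracting the subcount of \emph{non-repelling} fixed points is exactly what the Fatou-type dynamical argument (requiring a globally iterable map of known degree) is for.

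A secondary issue: even granting a meromorphic extension of $F$ to the once-reflected domain $\Omega^*$, the fixed points of $\overline{F}$ need not all lie in a single fundamental domain in any sense that lets you divide the degree by the order of reflection; non-repelling fixed points of an anti-holomorphic map are counted via the critical points of $F$ and the Fatou argument, and for an automorphic $F$ the critical points also proliferate under the group. The degenerate-case discussion at the end (ruling out an infinite critical set) is the one part that does transfer from the proof of Theorem~\ref{thm:RL}, since an analytic arc of critical points would force a circle of critical points and hence an annulus, excluded for $k\ge 3$. But the core counting step is missing, so the proposal does not establish the bound $N\le 5(k-2)$; it only re-derives the weaker (and itself unproven, absent the extension result) bound $9k-14$.
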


The term \emph{circular domain} refers to a domain where each boundary component is a circle.

\subsection*{Magnanini's conjecture}

We conclude by recalling the following conjecture stated by R. Magnanini
(see \cite{Magnanini} for discussion).

\begin{conj}\label{conj:Magn}
Suppose $\Omega$ is simply-connected.
The number $N$ of critical points of $v$ in $\Omega$ satisfies
$$ N \leq 2m-1,$$
where $m$ denotes the number of maxima in $\Omega$ of the function $d_{\p \Omega}(z)$ defined as the distance from $z$ to $\p \Omega$.
\end{conj}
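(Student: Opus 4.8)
The plan is to follow the same anti-holomorphic dynamics schema that gave Theorem \ref{thm:RL}, but with the distance function $d_{\p\Omega}$ playing the role of the ``order'' parameter. Write $u$ for the harmonic function solving \eqref{eq:harm} and $F(z) = \ol{\nabla u(z)}$, so that the critical points of $v$ are precisely the fixed points of $\ol{F(z)}$, and (by the Hessian computation in the proof of Theorem \ref{thm:RL}) the maxima of $v$ occur at \emph{non-repelling} fixed points, i.e.\ points where $|F'(z)| \le 1$. The two things one needs in order to run the Khavinson--Neumann-type argument are: (i) an extension of $F$ past $\p\Omega$ to a global map of the plane for which a Fatou-type bound on the number of non-repelling cycles is available; and (ii) a bound, in terms of $m$, on the ``degree'' of that extension (more precisely, on the number of attracting fixed points it can carry). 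Granting both, one concludes $M \le $ (something linear in $m$) and then invokes Theorem \ref{thm:Morse} (with $k=1$), giving $N = M + S = 2M - 1$, which would yield $N \le 2m-1$ provided one can show $M \le m$.

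First I would reduce to the real-analytic boundary case by the density argument of Proposition \ref{p1} (approximating $\Omega$ by RL-domains, or just by domains with real-analytic boundary) together with the genericity/stability remarks in Remark \ref{rmk:generic}, so that $F$ extends analytically across $\p\Omega$; one must check the approximation does not decrease $m$ in the limit, or argue semicontinuity in the right direction. Next, the heart of the matter is to relate $m$ — the number of local maxima of $z \mapsto d_{\p\Omega}(z)$, equivalently the number of connected components of the ridge/medial axis endpoints, or the number of ``inradius-realizing'' disks — to the dynamics of $\ol F$. The key geometric input I would try to exploit: at a maximum $z_0$ of $v$ we have $\nabla u(z_0) = z_0$, and the function $v$ near $z_0$ behaves like a concave paraboloid; I would attempt to associate to each maximum of $v$ a distinct local maximum of $d_{\p\Omega}$, by a gradient-flow or continuity argument that pushes a maximum of $v$ toward a point that is locally farthest from $\p\Omega$ (the intuition being that $v$ is large exactly where $\Omega$ is ``fat,'' since $v$ is comparable to $d_{\p\Omega}^2$ by standard barrier estimates, e.g.\ $c\, d_{\p\Omega}^2 \le v \le d_{\p\Omega}^2$). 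Making this association injective is precisely the content of $M \le m$.

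The main obstacle — and the reason \eqref{eq:guess}-style bounds are only conjectural in the excerpt — is step (ii) combined with this injectivity: there is no a priori reason a maximum of $v$ sits near a local max of $d_{\p\Omega}$ rather than, say, being created by the interaction of several nearby ``fat'' lobes, and conversely the anti-holomorphic map $\ol F$ could in principle carry more non-repelling fixed points than the naive degree count suggests unless one controls its global structure (this is exactly Problem \ref{prob:ext1}). Concretely, I expect the argument to stall at showing that $F$ extends as a quasirational map of controlled degree: without such an extension one cannot invoke a Fatou bound at all, and with only the weaker extension of Problem \ref{prob:ext2} one gets $N \le m^2$ rather than $N \le 2m-1$. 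So I would frame the honest outcome as: \emph{conditional on an affirmative answer to Problem \ref{prob:ext1} with degree bounded by a linear function of $m$, plus the injectivity claim $M\le m$, Magnanini's conjecture follows from Theorem \ref{thm:Morse}}; and I would present the $d_{\p\Omega}^2$-comparison and the gradient-flow injectivity attempt as the partial progress, flagging the quasiconformal surgery as the remaining gap.
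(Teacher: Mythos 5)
The statement you are asked to prove is Magnanini's conjecture, which the paper records explicitly as an open problem (``To our knowledge, this conjecture is still wide open''); there is no proof in the paper to compare against, and your proposal does not close the gap either. To your credit, you are honest about this: you frame the outcome as conditional on an affirmative answer to Problem \ref{prob:ext1} plus the injectivity claim $M \le m$, and those are indeed the two places where the argument genuinely stalls. But it is worth being precise about why each one fails as stated. First, even granting a quasirational extension of $F$, the Fatou-type bound of \cite{KhavinsonNeumann} controls the number of non-repelling fixed points by the \emph{degree} of the extension (more precisely by the count of its critical points), and there is no mechanism anywhere in the paper, or in your sketch, relating that degree to $m$. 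The quantity the paper's heuristic \eqref{eq:guess} actually controls is $n$, the number of local maxima of $|\phi|$ on $\p\D$ (boundary oscillations of the distance to a fixed \emph{interior} reference point); $m$ is the number of local maxima of $d_{\p\Omega}$, a medial-axis quantity, and the two are genuinely different invariants. Conflating them is not a technicality one can wave away.

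Second, the injectivity step $M \le m$ is not supported by the comparability $c\,d_{\p\Omega}^2 \le v \le C\,d_{\p\Omega}^2$ even where such a two-sided bound holds: comparability of two functions says nothing about the relative counts of their local maxima (one can perturb a function within a fixed comparability class and create arbitrarily many local maxima). A gradient-flow argument assigning to each maximum of $v$ a nearby maximum of $d_{\p\Omega}$ would have to rule out exactly the scenario you yourself flag --- a maximum of $v$ created by the interaction of several fat lobes --- and no such ruling-out is offered. Third, the proposed reduction to real-analytic boundary via Proposition \ref{p1} is problematic for this conjecture: Hausdorff approximation by RL-domains controls neither $m$ (which can jump under small perturbations of $\p\Omega$) nor the order of the approximating RL-domain (which blows up as $\delta \to 0$ in the Runge argument), so no bound survives the limit. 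In short, you have correctly identified the shape a proof might take and the obstructions (Problems \ref{prob:ext1} and \ref{prob:ext2}), but the proposal contains no new idea that overcomes them, and the conjecture remains open.
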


To our knowledge, this conjecture is still wide open.
There is no algebraicity condition imposed in the conjecture, yet
it may be worth exploring the conjecture within some interesting classes of domains with algebraic boundary,
since the definition of $m$ in the conjecture is reminiscent of the
so-called \emph{Euclidean distance degree}
studied in real algebraic geometry \cite{Draisma2016}.

One can generalize Conjecture \ref{conj:Magn} to the multiply-connected case
by using $m$ as a conjectural bound on the number of maxima of $v$ and then applying Theorem \ref{thm:Morse}.
For $k$-connected domains, this results in a conjectured bound
of $N \leq 2m-2 + k$
(note the resemblance to Conjecture \ref{conj:quad}).
Relating this multiply-connected version of the conjecture to Conjecture \ref{conj:convex}
raises a basic question in the setting of Computational Geometry:
How many locally maximal disks can be placed within a $k$-connected circular domain?
We were unable to find an answer to this question, but an upper bound of $2(k-2)$
seems plausible
and would
give $2m-2 + k = 5(k-2)$.

\bibliographystyle{abbrv}
\bibliography{crit}

\begin{thebibliography}{10}

\bibitem{AhSh}
D.~Aharonov and H.~S. Shapiro.
\newblock Domains on which analytic functions satisfy quadrature identities.
\newblock {\em J. Analyse Math.}, 30:39--73, 1976.

\bibitem{AlMa}
G.~Alessandrini and R.~Magnanini.
\newblock The index of isolated critical points and solutions of elliptic
  equations in the plane.
\newblock {\em Ann. Scuola Norm. Sup. Pisa Cl. Sci. (4)}, 19(4):567--589, 1992.

\bibitem{AlMa94}
G.~Alessandrini and R.~Magnanini.
\newblock Elliptic equations in divergence form, geometric critical points of
  solutions, and {S}tekloff eigenfunctions.
\newblock {\em SIAM J. Math. Anal.}, 25(5):1259--1268, 1994.

\bibitem{ArnoldDavidFilJerMay}
D.~Arnold, G.~David, M.~Filoche, D.~Jerison, and S.~Mayboroda.
\newblock Computing spectra without solving eigenvalue problems.
\newblock {\em SIAM Journal on Scientific Computing}, 41(1):B69--B92, 2019.

\bibitem{ArnoldDavidJerMayFil}
D.~N. Arnold, G.~David, D.~Jerison, S.~Mayboroda, and M.~Filoche.
\newblock Effective confining potential of quantum states in disordered media.
\newblock {\em Phys. Rev. Lett.}, 116:056602, Feb 2016.

\bibitem{BergErem2010}
W.~Bergweiler and A.~Eremenko.
\newblock On the number of solutions of a transcendental equation arising in
  the theory of gravitational lensing.
\newblock {\em Comput. Methods Funct. Theory}, 10(1):303--324, 2010.

\bibitem{BergErem2016}
W.~Bergweiler and A.~Eremenko.
\newblock Green's function and anti-holomorphic dynamics on a torus.
\newblock {\em Proc. Amer. Math. Soc.}, 144(7):2911--2922, 2016.

\bibitem{BergErem2018}
W.~Bergweiler and A.~Eremenko.
\newblock On the number of solutions of some transcendental equations.
\newblock {\em Anal. Math. Phys.}, 8(2):185--196, 2018.

\bibitem{Bleher}
P.~M. Bleher, Y.~Homma, L.~L. Ji, and R.~K.~W. Roeder.
\newblock {Counting Zeros of Harmonic Rational Functions and its Application to
  Gravitational Lensing}.
\newblock {\em International Mathematics Research Notices}, 2014(8):2245--2264,
  01 2013.

\bibitem{BrFa}
B.~Branner and N.~Fagella.
\newblock {\em Quasiconformal surgery in holomorphic dynamics}, volume 141 of
  {\em Cambridge Studies in Advanced Mathematics}.
\newblock Cambridge University Press, Cambridge, 2014.
\newblock With contributions by Xavier Buff, Shaun Bullett, Adam L. Epstein,
  Peter Ha\"{\i}ssinsky, Christian Henriksen, Carsten L. Petersen, Kevin M.
  Pilgrim, Tan Lei and Michael Yampolsky.

\bibitem{LW2}
C.-L. Chai, C.-S. Lin, and C.-L. Wang.
\newblock Mean field equations, hyperelliptic curves and modular forms: {I}.
\newblock {\em Camb. J. Math.}, 3(1-2):127--274, 2015.

\bibitem{Crowe}
W.~D. Crowe, R.~Hasson, P.~J. Rippon, and P.~E.~D. Strain-Clark.
\newblock On the structure of the {M}andelbar set.
\newblock {\em Nonlinearity}, 2(4):541--553, 1989.

\bibitem{Davis}
P.~J. Davis.
\newblock {\em The {S}chwarz function and its applications}.
\newblock The Mathematical Association of America, Buffalo, N. Y., 1974.
\newblock The Carus Mathematical Monographs, No. 17.

\bibitem{Draisma2016}
J.~Draisma, E.~Horobe{\c{T}}, G.~Ottaviani, B.~Sturmfels, and R.~R. Thomas.
\newblock The euclidean distance degree of an algebraic variety.
\newblock {\em Foundations of Computational Mathematics}, 16(1):99--149, Feb
  2016.

\bibitem{MayFilclamped}
M.~Filoche and S.~Mayboroda.
\newblock Strong localization induced by one clamped point in thin plate
  vibrations.
\newblock {\em Phys. Rev. Lett.}, 103:254301, Dec 2009.

\bibitem{Mayboroda}
M.~Filoche and S.~Mayboroda.
\newblock Universal mechanism for {A}nderson and weak localization.
\newblock {\em Proc. Natl. Acad. Sci. USA}, 109(37):14761--14766, 2012.

\bibitem{FleemanKhavinson}
M.~Fleeman and D.~Khavinson.
\newblock Approximating {$\overline z$} in the {B}ergman space.
\newblock In {\em Recent progress on operator theory and approximation in
  spaces of analytic functions}, volume 679 of {\em Contemp. Math.}, pages
  79--90. Amer. Math. Soc., Providence, RI, 2016.

\bibitem{FlLu}
M.~Fleeman and E.~Lundberg.
\newblock The {B}ergman analytic content of planar domains.
\newblock {\em Comput. Methods Funct. Theory}, 17(3):369--379, 2017.

\bibitem{FlSi}
M.~Fleeman and B.~Simanek.
\newblock Torsional {R}igidity and {B}ergman {A}nalytic {C}ontent of {S}imply
  {C}onnected {R}egions.
\newblock {\em Comput. Methods Funct. Theory}, 19(1):37--63, 2019.

\bibitem{GuSh}
B.~Gustafsson and H.~S. Shapiro.
\newblock What is a quadrature domain?
\newblock In {\em Quadrature domains and their applications}, volume 156 of
  {\em Oper. Theory Adv. Appl.}, pages 1--25. Birkh\"{a}user, Basel, 2005.

\bibitem{KhLu2010}
D.~Khavinson and E.~Lundberg.
\newblock Transcendental harmonic mappings and gravitational lensing by
  isothermal galaxies.
\newblock {\em Complex Anal. Oper. Theory}, 4(3):515--524, 2010.

\bibitem{KhavinsonNeumann}
D.~Khavinson and G.~Neumann.
\newblock On the number of zeros of certain rational harmonic functions.
\newblock {\em Proc. Amer. Math. Soc.}, 134(4):1077--1085, 2006.

\bibitem{KhSw}
D.~Khavinson and G.~\'{S}wi\c{a}tek.
\newblock On the number of zeros of certain harmonic polynomials.
\newblock {\em Proc. Amer. Math. Soc.}, 131(2):409--414, 2003.

\bibitem{La}
D.~J. Lawrence.
\newblock {\em A catalog of special plane curves}.
\newblock Dover, 1972.

\bibitem{LeeMak}
S.-Y. Lee and N.~G. Makarov.
\newblock Topology of quadrature domains.
\newblock {\em J. Amer. Math. Soc.}, 29(2):333--369, 2016.

\bibitem{LW1}
C.-S. Lin and C.-L. Wang.
\newblock Elliptic functions, {G}reen functions and the mean field equations on
  tori.
\newblock {\em Ann. of Math. (2)}, 172(2):911--954, 2010.

\bibitem{LW3}
C.-S. Lin and C.-L. Wang.
\newblock Mean field equations, hyperelliptic curves and modular forms: {II}.
\newblock {\em J. \'{E}c. polytech. Math.}, 4:557--593, 2017.

\bibitem{Magnanini}
R.~Magnanini.
\newblock An introduction to the study of critical points of solutions of
  elliptic and parabolic equations.
\newblock {\em Rend. Istit. Mat. Univ. Trieste}, 48:121--166, 2016.

\bibitem{Matsumoto}
Y.~Matsumoto.
\newblock {\em An introduction to {M}orse theory}, volume 208 of {\em
  Translations of Mathematical Monographs}.
\newblock American Mathematical Society, Providence, RI, 2002.
\newblock Translated from the 1997 Japanese original by Kiki Hudson and
  Masahico Saito, Iwanami Series in Modern Mathematics.

\bibitem{Sch2}
S.~Mukherjee, S.~Nakane, and D.~Schleicher.
\newblock On multicorns and unicorns {II}: bifurcations in spaces of
  antiholomorphic polynomials.
\newblock {\em Ergodic Theory Dynam. Systems}, 37(3):859--899, 2017.

\bibitem{Musk}
N.~Muskhelishveli.
\newblock {\em Some Basic Problems of the Mathematical Theory of Elasticity}.
\newblock Springer, 1977.

\bibitem{Sch1}
S.~Nakane and D.~Schleicher.
\newblock On multicorns and unicorns. {I}. {A}ntiholomorphic dynamics,
  hyperbolic components and real cubic polynomials.
\newblock {\em Internat. J. Bifur. Chaos Appl. Sci. Engrg.}, 13(10):2825--2844,
  2003.

\bibitem{Rhie}
S.~H. Rhie.
\newblock n-point gravitational lenses with 5(n-1) images.
\newblock {\em arxiv.org/abs/astro-ph/0305166}, 2003.

\bibitem{Sh-Sm}
T.~Sheil-Small.
\newblock {\em Complex polynomials}, volume~75 of {\em Cambridge Studies in
  Advanced Mathematics}.
\newblock Cambridge University Press, Cambridge, 2002.

\bibitem{Steinerberger}
S.~Steinerberger.
\newblock Localization of quantum states and landscape functions.
\newblock {\em Proc. Amer. Math. Soc.}, 145(7):2895--2907, 2017.

\bibitem{Wilm}
A.~S. Wilmshurst.
\newblock The valence of harmonic polynomials.
\newblock {\em Proc. Amer. Math. Soc.}, 126(7):2077--2081, 1998.

\end{thebibliography}

\end{document}